\theoremstyle{plain}
\newtheorem{theorem}{Theorem}[section]
\newtheorem{lemma}[theorem]{Lemma}
\newtheorem{corollary}[theorem]{Corollary}
\newtheorem{proposition}[theorem]{Proposition}
\theoremstyle{definition}
\newtheorem{remark}[theorem]{Remark}
\newtheorem{definition}[theorem]{Definition}
\numberwithin{equation}{section}
\def\la{\lambda}
\def\R{\mathbb R}
\begin{document}

 \baselineskip=17pt    

\title{Lorentz-Shimogaki and Boyd theorems for weighted Lorentz spaces} 
\author {Elona Agora, Jorge Antezana,  Mar\'{\i}a J. Carro,  and Javier Soria} 

\address{E. Agora, Department of Applied Mathematics and Analysis, University of Barcelona, 
 08007 Barcelona, Spain.}
\email{elona.agora@gmail.com}

\address{J. Antezana, Department of Mathematics, 
Faculty of Exact Sciences, 
National University of La Plata, 
1900 La Plata, Argentina.}
\email{jaantezana@yahoo.com.ar}

\address{M. J. Carro, Department of Applied Mathematics and Analysis, University of Barcelona, 
 08007 Barcelona, Spain.}
\email{carro@ub.edu}

\address{J. Soria, Department of Applied Mathematics and Analysis, University of Barcelona, 
 08007 Barcelona, Spain.}
\email{soria@ub.edu}

\subjclass[2010]{26D10, 42A50}
\keywords{Weighted Lorentz spaces, Hilbert transform, indexes}
\thanks{This work has been partially supported by the Spanish Government Grant MTM2010-14946.} 

\begin{abstract} We prove  the Lorentz-Shimogaki and  Boyd theorems for the spaces $\Lambda^p_u(w)$. 
As a consequence, we give the complete characterization of the strong boundedness of $H$ on these spaces in terms of some geometric conditions on the weights $u$ and $w$, whenever $p>1$.
For these values of $p$, we also give the complete solution of the weak-type boundedness of the Hardy-Littlewood operator on $\Lambda^p_u(w)$. 
\end{abstract}

\date{\today}

\maketitle

\pagestyle{headings}\pagenumbering{arabic}\thispagestyle{plain}

\markboth{Lorentz-Shimogaki and Boyd theorems}{Elona Agora, Jorge Antezana,  Mar\'{\i}a J. Carro, and Javier Soria }

\section{Introduction and motivation}

Given a rearrangement invariant (r.i.) Banach function space  $X$ on $\mathbb R$, the Lorentz-Shimogaki theorem 
(\cite{l3:l3}, \cite{sh:sh} see also \cite [p.\ 154] {bs:bs})   asserts that 
$$
M:X\longrightarrow X \mbox{ is bounded }\qquad\iff\qquad \alpha_X<1, 
$$
where $M$  is the classical  Hardy-Littlewood maximal operator
$$
Mf(x)=\sup_{x\in I}{\frac{1}{|I|}}\int_{I} |f(y)|dy,
$$
(the supremum is taken over all intervals $I$ containing $x\in \R$) and $\alpha_X$ is the upper Boyd index
defined  (\cite{b:b} see also \cite [p.\ 149] {bs:bs})  as follows: 
$$
\alpha_{X}:= \lim_{t \to \infty}\frac{\log {||D_t||_X}}{\log t},
$$
with   
$$
||D_t||_X=\sup_{||f||_X\le 1} ||D_t f||_X, 
$$
  the norm of the dilation operator $D_t f(s)= f(s/t)$. 

Similarly, the classical Boyd theorem  shows  \cite [p.\ 154] {bs:bs}  that 
$$
H:X\longrightarrow X\qquad \mbox{is bounded }\qquad\iff\qquad \alpha_X<1\quad\mbox{and}\quad \beta_X>0, 
$$
where $H$ is the Hilbert transform
$$
Hf(x)=\frac{1}{\pi} \lim_{\varepsilon\to 0^+} \int_{|x-y| > \varepsilon} \frac{f(y)}{x-y}\,dy, 
$$
whenever this limit exists almost everywhere and $\beta_X$ is the lower Boyd index defined by 
$$
\beta_{X}:= \lim_{t \to 0^+}\frac{\log {||D_t||_X}}{\log t}.
$$

In \cite{ms:ms} the Lorentz-Shimogaki and Boyd theorems were extended to the case of  r.i. quasi-Banach spaces.

In a recent paper  \cite{lp:lp}, the upper Boyd index for a general   quasi-Banach function space $X$,     not necessarily r.i.,  was defined using the so-called local maximal operator.  With such definition the classical  Lorentz-Shimogaki theorem was extended to this more general class of spaces.

This  paper is a continuation of the work initiated in  \cite{lp:lp} for a concrete class of quasi-Banach spaces, namely, for weighted Lorentz spaces $\Lambda^p_u(w) $ defined by (see~\cite{l1:l1}, \cite{l2:l2}) 
$$
\Lambda^{p}_{u}(w) =\left\{f\in\mathcal M(\mathbb R): \, ||f||_{\Lambda^{p}_{u}(w)}=\left( \int_0^{\infty}  (f^*_u(t))^p w(t)dt\right) ^ {1/p}< \infty \right\}. 
$$
Here, $\mathcal M(\mathbb R)$ is the class of Lebesgue measurable functions on $\mathbb R$ (we work in dimension one since we shall be concerned with the Hilbert transform), $u$ is a positive and locally integrable function on $\mathbb R$ (we call it weight),  $w$ will also be a weight but defined in $(0, \infty)$,   $f^*_u$  is the decreasing rearrangement of $f$ with respect to the weight $u$  (see \cite{bs:bs}),
$$
f^*_u(t)=\inf\big\{ s>0: u(\{ x\in \mathbb R: |f(x)|>s\})\le t \big\}, 
$$
with $u(E)=\int_E  u(x) dx$ and $0<p<\infty$. We would like to mention   that these spaces include as particular cases the weighted Lebesgue spaces $L^p(u)$ (with $w=1$),  the classical Lorentz spaces $\Lambda^p(w)$ (with $u=1$),  and the  Lorentz spaces $L^{q, p}(u)$ (with $w(t)=t^{p/q -1}$). We shall also need to work with the weak-type space 
$$
\Lambda^{p, \infty}_{u}(w) =\left\{f\in\mathcal M(\mathbb R): \, ||f||_{\Lambda^{p, \infty}_{u}(w)}=\sup_{t>0} f^*_u(t)W^{1/p}(t) < \infty \right\},
$$
where $W(t)=\int_0^t w(s) ds$.

As usual, we shall use the symbol $A\lesssim B$ to indicate that there exists a universal constant $C$, independent of all important parameters, such that $A\le C B$. $A\approx B$ will indicate that 
$A\lesssim B$ and $B\lesssim A$.  If  $E$ is a measurable set and  $u=1$,  we write $u(E)=|E|$. We also recall  that a weight $u$ is in the Muckenhoupt class $A_1$ if 
$Mu(x)\lesssim u(x)$, at almost every point $x\in\mathbb R$. For other definitions (like the  $A_\infty$ class) and further properties about Muckenhoupt  weights we refer to the book~\cite{gr:gr}.

It is known that   the  space $\Lambda^{p}_{u}(w)$  is a  quasi-normed 
space if and only if $w\in \Delta_2$  \cite{cgs:cgs}; that is,
$$
W(2r)\lesssim W(r).
$$
This condition will be assumed all over the paper.  

Concerning the upper Boyd index for these spaces, it was proved in  \cite{lp:lp}  that 
\begin{equation}\label{alfa}
\alpha_{\Lambda^{p}_{u}(w)}=  \lim_{t \to \infty}\frac{\log\overline{W}_u^{1/p}(t)}{\log t}, 
\end{equation}
where, for every $t>1$, 
\begin{equation*}
\overline{W}_{u}(t):=\sup \left\{\frac{W\left(u\Big(\bigcup_{j}I_j\Big)\right)}{W\left(u\Big(\bigcup_{j}S_j\Big)\right)}:
 S_j\subseteq I_j \mbox{ and } |I_j|<t |S_j|,  \mbox{ for every }  j \right\},
\end{equation*}
with  $I_{j}$     disjoint  intervals, $S_j$   measurable subsets,   and all unions are finite. To see (\ref{alfa}), the following result was used:
\begin{theorem} \cite{crs:crs}\label{strongmaximal}
If $0<p<\infty$, 
$$
M:\Lambda^{p}_{u}(w)\longrightarrow \Lambda^{p}_{u}(w)
$$
is bounded if and only if 
 there exists $q\in(0,p)$ such that,  for every finite family of  disjoint intervals $(I_j)_{j=1}^J$,
and every family of measurable sets $(S_j)_{j=1}^{J}$, with $S_j\subset I_j$, for every $j$,  we have that
\begin{equation}\label{raposo}
\frac{W\left(u\left(\bigcup_{j=1}^J I_j\right)\right)}{W\left(u\left(\bigcup_{j=1}^J S_j\right)\right)}
       \lesssim \max_{1\leq j\leq J} \left(\frac{|I_j|}{|S_j|}\right)^q.
\end{equation}
\end{theorem}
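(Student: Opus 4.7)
The plan is to prove the equivalence by establishing each implication separately. Necessity comes from testing the boundedness of $M$ on a characteristic function and exploiting a layer-cake estimate; sufficiency reduces, via a Vitali covering, to a restricted weak-type bound which is then lifted to strong-type using the strict gap $q<p$.

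For the necessity, I would choose $f=\chi_E$ with $E=\bigcup_j S_j$, so that $\|f\|^p_{\Lambda^p_u(w)}=W(u(E))$. On each interval $I_j$ one has $Mf(x)\geq |S_j|/|I_j|=:\alpha_j$. After relabelling so that $\alpha_1\geq\alpha_2\geq\cdots\geq\alpha_J$, the inclusion $\{Mf\geq\alpha_J\}\supseteq\bigcup_j I_j$ combined with the layer-cake identity
$$
\|Mf\|^p_{\Lambda^p_u(w)}=p\int_0^\infty \lambda^{p-1} W(u(\{Mf>\lambda\}))\,d\lambda
$$
yields $\alpha_J^p W(u(\bigcup_j I_j))\lesssim \|Mf\|^p_{\Lambda^p_u(w)}\lesssim W(u(E))$, which rearranges to (\ref{raposo}) with the exponent $q=p$. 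The strict refinement to some $q<p$ is then obtained by a self-improvement argument in the spirit of the Muckenhoupt $A_p$ openness, exploiting the quantitative strict boundedness of $M$ on the $\Lambda^p_u(w)$-scale.

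For sufficiency, assume (\ref{raposo}) holds with $q\in(0,p)$. A Vitali covering of $\{M\chi_E>\lambda\}$ by disjoint intervals $I_j$ with $|I_j\cap E|/|I_j|\geq\lambda$, combined with the choice $S_j:=I_j\cap E$ (so that $|I_j|/|S_j|\leq 1/\lambda$), followed by an application of (\ref{raposo}), gives the restricted weak-type bound
$$
W(u(\{M\chi_E>\lambda\}))\lesssim \lambda^{-q}W(u(E)),\qquad 0<\lambda\leq 1.
$$
Since $M\chi_E\leq 1$, the layer-cake formula yields $\|M\chi_E\|^p_{\Lambda^p_u(w)}\lesssim W(u(E))$, the integral $\int_0^1 \lambda^{p-1-q}\,d\lambda$ converging precisely because $q<p$. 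Passage to general $f$ is then carried out by a dyadic decomposition on the range of $f$, say $f\approx\sum_k 2^k\chi_{E_k}$ with $E_k=\{2^{k-1}<|f|\leq 2^k\}$, summing the individual estimates $\|M\chi_{E_k}\|_{\Lambda^p_u(w)}$ and invoking the $\Delta_2$ condition on $W$.

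The principal technical obstacle is the final summation step of the sufficiency direction: since $\Lambda^p_u(w)$ is only quasi-Banach, the triangle inequality carries a multiplicative constant, so one must carefully use the geometric decay afforded by $q<p$, together with the $\Delta_2$ hypothesis on $W$, to control the sum over dyadic levels and the overlaps between the supports of $M\chi_{E_k}$. The necessity direction is more direct, the only subtle step being the self-improvement from $q=p$ to a strictly smaller exponent.
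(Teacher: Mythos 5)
Note first that the paper does not prove this theorem; it is quoted from Carro, Raposo, and Soria \cite{crs:crs}. So the only possible comparison is between your sketch and what a correct proof actually requires, and there the necessity direction has a genuine gap. Testing $M$ on $\chi_E$ with $E=\bigcup_j S_j$ does give the inclusion $\bigcup_j I_j\subset\{M\chi_E\geq\min_j |S_j|/|I_j|\}$ and hence the estimate \eqref{raposo} with the endpoint exponent $q=p$, that is, $\overline W_u(t)\lesssim t^p$. But this is exactly the necessary condition for the \emph{weak}-type boundedness recorded in Remark~\ref{rem}(ii); it is strictly weaker than the strong-type characterization, and passing from $q=p$ to some $q<p$ is the entire content of the theorem. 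The ``self-improvement in the spirit of $A_p$ openness'' that you invoke is not a routine deduction: the condition $\overline W_u(t)\lesssim t^p$ alone does not self-improve, and the implication (boundedness of $M$)$\,\Rightarrow\,$($q<p$) is precisely one of the main technical results of this very paper, Theorem~\ref{BpiBp}, which is valid only for $p>1$ and whose proof hinges on the delicate test functions built in Lemma~\ref{mod} and Lemma~\ref{mean}, not on characteristic functions. For $p\leq1$ no weak-to-strong self-improvement is available at all (weak type is equivalent to $q=p$), so the strict inequality $q<p$ must be extracted directly from the strong norm with more refined test functions. As written, your necessity argument reaches only $q=p$ and defers the real difficulty without an argument.

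The sufficiency direction is more plausible but also incomplete. The Vitali covering lemma yields a disjoint subfamily $\{I_j\}$ whose \emph{dilates} cover $\{M\chi_E>\lambda\}$; the dilated intervals are no longer disjoint, so \eqref{raposo} cannot be applied to them directly, and you need a further argument (using the structure of $u$ and the $\Delta_2$ condition on $W$) to pass from $W\big(u(\bigcup_j I_j)\big)$ to $W\big(u(\bigcup_j cI_j)\big)$. The extrapolation from a restricted weak-type bound on characteristic functions to arbitrary $f$ by dyadic decomposition in the range of $f$ is a classical idea, but you correctly flag that the quasi-triangle inequality is the obstacle, and ``invoking $\Delta_2$'' is not by itself a proof: one must actually handle the geometry of the overlapping level sets $\{M\chi_{E_k}>\lambda\}$ across scales $k$. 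These gaps are repairable, but the necessity gap is the structural one.
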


\begin{remark} \label{igual}
 For later purposes, it is important to mention that, by regularity and continuity, 
\begin{equation*} 
\overline{W}_{u}(t):=\sup \left\{\frac{W\left(u\Big(\bigcup_{j}I_j\Big)\right)}{W\left(u\Big(\bigcup_{j}S_j\Big)\right)}:
 S_j\subseteq I_j \mbox{ and } |I_j|=t |S_j|,  \mbox{ for every }  j \right\},
\end{equation*}
 where, for every $j$,   $S_j$ is a finite union of intervals. 
\end{remark}

\begin{remark}\label{rem}

\ 

\noindent (i)
We observe that (\ref{raposo}) is equivalent to saying that  there exists $q\in(0,p)$ such that, for every $t>1$, 
$$
\overline{W}_{u}(t) \lesssim t^q. 
$$

\ 

\noindent (ii) It was also proved in \cite{crs:crs} that,  if $0<p<\infty$ and 
\begin{equation}\label{Mweak}
M:\Lambda^{p}_{u}(w)\longrightarrow \Lambda^{p, \infty}_{u}(w)
\end{equation}
is bounded, then $\overline{W}_{u}(t) \lesssim t^p$. Moreover, if $0<p<1$, this condition is sufficient for (\ref{Mweak}), although  this is not the case for other values of $p$. In this paper we shall also give a characterization, in the case $p>1$,    of the weights $u$ and $w$ for which
 $$
 M:{\Lambda^{p}_{u}(w)}\longrightarrow \Lambda^{p, \infty}_{u}(w) 
 $$
 is bounded solving an open problem left in \cite{crs:crs},  (see Theorem~3.2.11). 
\end{remark}

We now describe the main goals of this work: 
\medskip

\noindent
(i)  We give a new proof of the Lorentz-Shimogaki theorem for weighted Lorentz spaces,  without using the local maximal operator (we shall define the upper Boyd index by (\ref{alfa})).

\medskip
\noindent
(ii)   We study whether  the corresponding generalization of the classical Boyd theorem for the Hilbert transform:
$$
H:{\Lambda^{p}_{u}(w)}\longrightarrow \Lambda^{p}_{u}(w)\, \mbox{ is bounded }\quad \iff\quad \beta_{\Lambda^{p}_{u}(w)}>0 \mbox{ and } \alpha_{\Lambda^{p}_{u}(w)}<1
$$
 holds true, where the generalized lower Boyd  index $\beta_{\Lambda^{p}_{u}(w)}$ will be defined later on.
 
 Concerning (ii), we shall prove that this is the case if $p>1$ and, as a consequence, we shall give the complete characterization of the boundedness 
 $$
 H:{\Lambda^{p}_{u}(w)}\longrightarrow \Lambda^{p}_{u}(w) 
 $$
 in the  case $p>1$ in terms of geometric conditions on the weights $u$ and $w$.  
 
 Finally, we shall show that, for every $p>0$, 
$$
\beta_{\Lambda^{p}_{u}(w)}>0 \mbox{ and } \alpha_{\Lambda^{p}_{u}(w)}<1 \implies   H:{\Lambda^{p}_{u}(w)}\longrightarrow \Lambda^{p}_{u}(w)\implies \beta_{\Lambda^{p}_{u}(w)}>0. 
$$

\section{The case $u=1$: $\Lambda^p(w)$}

Let us start by analyzing the case $u=1$ since it was the starting point for our results.  Note that the space $\Lambda^p(w)$ is, in fact, a rearrangement invariant function space. In particular, a simple computation of $||D_t||_{\Lambda^{p}(w)}$ gives us the following result. 

\begin{proposition}  \cite{b:b, ms:ms} For every $0<p<\infty$, 
$$
\alpha_{\Lambda^{p}(w)}:= \lim_{t \to \infty}\frac{\log\overline{W}^{1/p}(t)}{\log t},
$$
and
$$
\beta_{\Lambda^{p}(w)}:= \lim_{t\to 0^+}\frac{\log\overline{W}^{1/p}(t)}{\log t},
$$
where 
$$
\overline{W}(t):=\sup_{s\in(0,+\infty)} \frac{W(st)}{W(s)}.
$$
\end{proposition}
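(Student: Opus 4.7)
The plan is to compute the dilation norm $\|D_t\|_{\Lambda^{p}(w)}$ explicitly and show that it equals $\overline{W}(t)^{1/p}$. Once this is established, both identities follow immediately from the classical definitions of the Boyd indices after taking the logarithm and letting $t\to\infty$ or $t\to 0^+$; since $\Lambda^{p}(w)$ is rearrangement invariant, these classical definitions apply.

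First I would note the elementary fact that $(D_t f)^{*}(s)=f^{*}(s/t)$, which follows because dilation by $t$ simply rescales the distribution function by a factor of $t$. Then a change of variables gives
$$
\|D_t f\|^{p}_{\Lambda^{p}(w)}=\int_{0}^{\infty}f^{*}(s/t)^{p}w(s)\,ds=\int_{0}^{\infty}f^{*}(r)^{p}\,dW(tr).
$$
Applying the layer cake formula to both $\|D_t f\|^{p}_{\Lambda^{p}(w)}$ and $\|f\|^{p}_{\Lambda^{p}(w)}$, writing $\mu_f(\lambda)=|\{|f|>\lambda\}|$, I would rewrite these as
$$
\|D_t f\|^{p}_{\Lambda^{p}(w)}=p\int_{0}^{\infty}\lambda^{p-1}W(t\,\mu_f(\lambda))\,d\lambda,\qquad \|f\|^{p}_{\Lambda^{p}(w)}=p\int_{0}^{\infty}\lambda^{p-1}W(\mu_f(\lambda))\,d\lambda.
$$

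From the very definition $W(ts)\le\overline{W}(t)W(s)$ for all $s>0$, so the upper bound $\|D_t f\|^{p}_{\Lambda^{p}(w)}\le \overline{W}(t)\|f\|^{p}_{\Lambda^{p}(w)}$ is immediate, giving $\|D_t\|_{\Lambda^{p}(w)}\le \overline{W}(t)^{1/p}$. For the matching lower bound I would test on characteristic functions: taking $f=\chi_{E}$ with $|E|=s$, we get $\|f\|^{p}_{\Lambda^{p}(w)}=W(s)$ and $\|D_t f\|^{p}_{\Lambda^{p}(w)}=W(ts)$, so
$$
\|D_t\|^{p}_{\Lambda^{p}(w)}\ge \sup_{s>0}\frac{W(ts)}{W(s)}=\overline{W}(t).
$$
Combining the two bounds yields $\|D_t\|_{\Lambda^{p}(w)}=\overline{W}(t)^{1/p}$, and the formulas for $\alpha_{\Lambda^{p}(w)}$ and $\beta_{\Lambda^{p}(w)}$ follow.

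I do not anticipate any real obstacle here: the argument is essentially a direct computation exploiting the rearrangement invariance of $\Lambda^{p}(w)$ together with the behavior of the Lebesgue measure under dilations. The only minor care needed is the use of the layer cake representation to ensure the upper bound $W(ts)\le \overline{W}(t)W(s)$ transfers from level sets to the full integral, and the verification that characteristic functions saturate the supremum defining $\overline{W}(t)$.
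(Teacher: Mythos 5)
Your proposal is correct and fills in exactly the ``simple computation'' of $\|D_t\|_{\Lambda^p(w)}$ that the paper alludes to without writing out: the upper bound via the pointwise estimate $W(t\mu_f(\lambda))\le\overline{W}(t)W(\mu_f(\lambda))$ under the layer-cake representation, and the matching lower bound by testing on characteristic functions, together give $\|D_t\|_{\Lambda^p(w)}=\overline{W}(t)^{1/p}$, from which both Boyd indices follow immediately. This is the same route the paper (and the cited references) have in mind.
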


\ 

Then, the Lorentz-Shimogaki theorem \cite{ms:ms} applied to $\Lambda^{p}(w)$ says that 
$$
M:\Lambda^{p}(w)\longrightarrow \Lambda^{p}(w)\quad \mbox{is bounded }\quad\iff\quad \lim_{t \to \infty}\frac{\log\overline{W}^{1/p}(t)}{\log t}<1.
$$

On the other hand, we recall the following result of Ari\~no and Muckenhoupt~\cite{am:am}:

\begin{theorem} For every $0<p<\infty$, 
$$
M:\Lambda^{p}(w)\longrightarrow \Lambda^{p}(w)\quad \mbox{is bounded }\quad\iff\quad  w\in B_p;
$$
that is, for every $r>0$, 
$$
r^p \int_r^{\infty}\frac{w(t)}{t^p} \, dt  \lesssim  \int_0^rw(s) ds. 
$$
\end{theorem}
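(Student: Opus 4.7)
The plan is to combine Theorem~\ref{strongmaximal} (with $u=1$) with a direct identification of the $B_p$ class with the dilation-type bound $\overline{W}(t)\lesssim t^q$ for some $q\in(0,p)$. When $u=1$ one has $\overline{W}_u=\overline{W}$, so Theorem~\ref{strongmaximal} together with Remark~\ref{rem}(i) tells us that $M:\Lambda^p(w)\to\Lambda^p(w)$ is bounded if and only if there exists $q\in(0,p)$ with $\overline{W}(t)\lesssim t^q$ for all $t>1$. The task therefore reduces to proving
\begin{equation*}
w\in B_p \quad\Longleftrightarrow\quad \exists\, q\in(0,p):\ \overline{W}(t)\lesssim t^q.
\end{equation*}

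For the easier implication ($\Leftarrow$), I would split $[r,\infty)$ dyadically and use $W(2^{k+1}r)\lesssim 2^{(k+1)q}W(r)$ to get
\begin{equation*}
\int_r^{\infty}\frac{w(t)}{t^p}\,dt \le \sum_{k\ge 0}\frac{W(2^{k+1}r)}{(2^kr)^p}\lesssim \frac{W(r)}{r^p}\sum_{k\ge 0}2^{k(q-p)}\lesssim \frac{W(r)}{r^p},
\end{equation*}
which, upon multiplying by $r^p$, is precisely the $B_p$ condition.

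The substantive step is the converse, which is in essence the self-improvement of the $B_p$ class. My approach is as follows. First, integration by parts shows that $B_p$ is equivalent to
\begin{equation*}
I(r):=\int_r^{\infty}\frac{W(t)}{t^{p+1}}\,dt\lesssim \frac{W(r)}{r^p}.
\end{equation*}
Since $I'(r)=-W(r)/r^{p+1}$, this reads $I(r)\le -CrI'(r)$, i.e.\ $(\log I)'(r)\le -1/(Cr)$. Integrating from $s$ to $r>s$ yields $I(r)\lesssim I(s)(s/r)^{1/C}$, and combining with the trivial lower bound $I(r)\ge W(r)/(2^{p+1}r^p)$ (obtained by restricting the integral to $[r,2r]$) and with $I(s)\lesssim W(s)/s^p$ produces
\begin{equation*}
\frac{W(r)}{W(s)}\lesssim \left(\frac{r}{s}\right)^{p-\epsilon}, \qquad r>s>0,\ \epsilon:=1/C>0,
\end{equation*}
i.e.\ $\overline{W}(t)\lesssim t^{p-\epsilon}$, completing the equivalence.

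The hard part is precisely this converse: the $B_p$ condition controls only an integral of $w$, whereas the target bound $\overline{W}(t)\lesssim t^{p-\epsilon}$ controls the pointwise growth of $W$ itself. The Gronwall-type argument on $I(r)$ above is what captures the ``openness'' of $B_p$ (namely, $w\in B_p\Rightarrow w\in B_{p-\epsilon}$) and is the one delicate ingredient; everything else is a direct calculation.
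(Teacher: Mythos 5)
The paper does not prove this theorem at all; it is quoted verbatim as the known Ari\~no--Muckenhoupt characterization and attributed to \cite{am:am}, and the subsequent Corollary~\ref{ari} also relies on citing, rather than proving, the equivalence $w\in B_p\iff\overline W(t)\lesssim t^q$ for some $q<p$. Your proposal therefore cannot be compared against a proof in the paper, but it is a correct, self-contained argument. The reduction via Theorem~\ref{strongmaximal} and Remark~\ref{rem}(i) to the statement $w\in B_p\iff\exists\,q<p:\overline W(t)\lesssim t^q$ is exactly the equivalence the paper cites from \cite{am:am}, and your Gronwall-type argument for the nontrivial direction (openness of $B_p$) is sound: after rewriting $B_p$ as $I(r):=\int_r^\infty W(t)t^{-p-1}\,dt\lesssim W(r)r^{-p}$ via Fubini, the differential inequality $I(r)\le -CrI'(r)$ integrates to $I(r)\le I(s)(s/r)^{1/C}$, and squeezing $I$ between $W(r)/(2^{p+1}r^p)$ and $CW(s)/s^p$ yields $\overline W(t)\lesssim t^{p-1/C}$; combined with the dyadic computation for the easy direction, this closes the equivalence. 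One caveat worth flagging is that Theorem~\ref{strongmaximal} comes from \cite{crs:crs} and its proof there generalizes the $u=1$ case, so logically your argument is only non-circular if one accepts Theorem~\ref{strongmaximal} as a black box; if you wanted a proof that is independent of \cite{crs:crs}, you would instead characterize $M:\Lambda^p(w)\to\Lambda^p(w)$ directly as an inequality for the Hardy operator on decreasing functions and then apply the same $B_p$ self-improvement, which is essentially the original route of Ari\~no and Muckenhoupt.
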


Consequently, we have the following corollary:

\begin{corollary}\label{ari} For every $0<p<\infty$, 
$$
\lim_{t \to \infty}\frac{\log\overline{W}^{1/p}(t)}{\log t}<1 \,\mbox{ if and only if }\,  w\in B_p. 
$$
\end{corollary}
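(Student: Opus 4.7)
The plan is essentially a one-line chain of equivalences, since the statement sits directly between two previously recorded characterizations. I would simply combine the Lorentz-Shimogaki theorem, as stated above for the rearrangement invariant space $\Lambda^{p}(w)$, with the Ari\~no--Muckenhoupt theorem (Theorem~2.2).

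More precisely, by Lorentz-Shimogaki applied to $X=\Lambda^{p}(w)$ (which is rearrangement invariant since $u=1$), the condition
$$
\lim_{t\to\infty}\frac{\log \overline{W}^{1/p}(t)}{\log t}<1
$$
is equivalent to the boundedness of $M:\Lambda^{p}(w)\longrightarrow \Lambda^{p}(w)$. By Theorem~2.2, this latter boundedness is in turn equivalent to $w\in B_p$. Stringing these two equivalences together yields the claim.

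The only thing worth double-checking is that the upper Boyd index identified in the previous proposition, namely $\alpha_{\Lambda^{p}(w)}=\lim_{t\to\infty}\log\overline{W}^{1/p}(t)/\log t$, is the same quantity that appears in the Lorentz-Shimogaki criterion $\alpha_X<1$ for r.i.\ quasi-Banach spaces from \cite{ms:ms}; but this is exactly the content of the proposition we just stated, so no additional work is required. Hence there is essentially no obstacle: the corollary is an immediate juxtaposition, and it serves mainly to record explicitly the geometric/analytic translation of the index condition in the classical case $u=1$, which will motivate the weighted extension in the following section.
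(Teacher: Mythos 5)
Your argument is correct and is, in fact, exactly how the paper first arrives at the statement: the word ``Consequently'' preceding the corollary signals that it follows by chaining the Lorentz--Shimogaki theorem for $\Lambda^{p}(w)$ with the Ari\~no--Muckenhoupt theorem, precisely as you do. However, the paper then announces ``We shall give a direct proof of this result,'' and the labeled \emph{Proof of Corollary~\ref{ari}} takes a genuinely different route: it applies Lemma~\ref{lll} (a purely elementary fact about increasing submultiplicative functions) to $\overline{W}^{1/p}$ and invokes the alternative formulation of $B_p$ from \cite{am:am}, namely $w\in B_p\iff\overline{W}(t)\lesssim t^q$ for some $q<p$ and every $t>1$. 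The paper's direct proof is more self-contained, bypassing the deep Lorentz--Shimogaki theorem for r.i.\ quasi-Banach spaces entirely, and, crucially, Lemma~\ref{lll} is the exact tool reused in the weighted setting (Theorem~\ref{popo}), where $\Lambda^p_u(w)$ is no longer rearrangement invariant and your juxtaposition argument would not apply. So your proof is valid but records a consequence; the paper's proof is an independent elementary verification designed to be portable to the weighted case.
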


We shall give a direct proof of this result using the following lemma about submultiplicative functions. 
Observe that  $\overline{W}$ is submultiplicative;  that is, for every $t, s>0$, 
$$
\overline{W}(ts)  \leq\overline{W}(t)\overline{W}(s). 
$$

\begin{lemma}\label{lll} 
For every submultiplicative increasing function $\varphi$ defined in $[1, \infty)$, 
$$
\lim_{t \to \infty}\frac{\log\varphi(t)}{\log t}<1, 
$$
if and only if there exists $\gamma<1$ such that $\varphi(x)\lesssim x^\gamma$, for every $x>1$. 

\end{lemma}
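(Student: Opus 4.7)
The plan is to treat the two implications separately. The easy direction ($\Leftarrow$) I will dispatch in one line: if $\varphi(x)\le C x^\gamma$ for all $x>1$, then
$$
\frac{\log\varphi(x)}{\log x}\le \frac{\log C}{\log x}+\gamma \xrightarrow[x\to\infty]{} \gamma<1,
$$
so the upper limit is at most $\gamma<1$.

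For the forward direction, the key structural observation is that submultiplicativity of $\varphi$ becomes subadditivity after the change of variables $\psi(s):=\log\varphi(e^s)$ on $[0,\infty)$, i.e.\ $\psi(s+t)\le \psi(s)+\psi(t)$. Applying Fekete's subadditive lemma to $\psi$ yields
$$
\lim_{t\to\infty}\frac{\log\varphi(t)}{\log t}\,=\,\inf_{t>1}\frac{\log\varphi(t)}{\log t}.
$$
Denote the common value by $L$, assumed strictly less than $1$. Fix $\gamma\in(L,1)$; by definition of the infimum (or equivalently of the limit), there exists $t_0>1$ such that $\varphi(t_0)\le t_0^{\gamma}$.

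To extend this single estimate to all $x\ge 1$, I will use monotonicity together with iterated submultiplicativity. Given $x\ge 1$, let $n=\lfloor \log x/\log t_0\rfloor$, so that $t_0^{n}\le x<t_0^{n+1}$. Then
$$
\varphi(x)\le \varphi(t_0^{n+1})\le \varphi(t_0)^{\,n+1}\le t_0^{\gamma(n+1)}=t_0^{\gamma}\,(t_0^{n})^{\gamma}\le t_0^{\gamma}\,x^{\gamma},
$$
which gives $\varphi(x)\lesssim x^{\gamma}$ with implicit constant $t_0^{\gamma}$, as required.

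I do not expect any serious obstacle: the only subtle point is the passage from the limit to the infimum, but this is exactly Fekete's lemma applied to the subadditive function $\psi$. In fact, one can bypass even naming Fekete: for any $\gamma>L$ the definition of the limit alone already produces a value $t_0>1$ with $\log\varphi(t_0)/\log t_0<\gamma$, and the iteration argument above then runs verbatim.
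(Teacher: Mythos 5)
Your proof is correct and takes essentially the same route as the paper: pick $t_0>1$ from the hypothesis that the limit is below $1$, then iterate monotonicity and submultiplicativity on the block $t_0^n\le x<t_0^{n+1}$ to get the power bound. The detour through Fekete's lemma is harmless but unnecessary, as you yourself note at the end; the paper simply extracts $t_0$ directly from the hypothesis.
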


\begin{proof} By hypothesis, there exists $t_0>1$ such that $\varphi(t_0) < t_0$. Now, given $x>1$, 
there exists $k\in\mathbb N$ such that $x\in ( t_0^k, t_0^{k+1})$ and hence, since  $\varphi$ is increasing and submultiplicative, 
$$
\varphi(x)\le\varphi(t_0^{k+1})\le \varphi(t_0)^{k+1}\le t_0 \bigg(\frac{\varphi(t_0)}{t_0}\bigg)^{k+1}  x.
$$
Using that  $c=\frac{\varphi(t_0)}{t_0}<1$, we have that 
$c^ {k+1}\le c^{\frac{\log x}{\log t_0} }= x^{{\frac{\log c}{\log t_0} }}$ with $\log c<0$. Hence, 
$$
\varphi(x)\le t_0 x^{1+{\frac{\log c}{\log t_0} } }\approx x^{\gamma},
$$
with $\gamma<1$. 
Conversely,  if $\varphi(t)\le C t^\gamma$, for every $t>1$, 
$$
\lim_{t \to \infty}\frac{\log\varphi(t)}{\log t}\le \lim_{t \to \infty}\frac{\log (C t^{\gamma})}{\log t} = \gamma<1.
$$
\end{proof}

\begin{proof} [Proof of Corollary~\ref{ari}]
It is enough to apply Lemma~\ref{lll} to the function $\overline W^{1/p}$ and recall that \cite{am:am}:
$$
w\in B_p\qquad\iff\qquad \overline W(t)\lesssim t^q, \, \mbox{ for some } q<p \, \mbox{ and every }  t>1. 
$$
\end{proof}

\ 

Similarly, the Boyd theorem applied to $\Lambda^{p}(w)$ says that 

\ 
\begin{eqnarray*} 
& &H:\Lambda^{p}(w)\longrightarrow \Lambda^{p}(w)  \mbox{ is bounded } \iff 
\\
& &\qquad\qquad\qquad\qquad \lim_{t \to \infty}\frac{\log\overline{W}^{1/p}(t)}{\log t}<1 
\qquad\mbox{ and } \qquad\lim_{t \to 0^+}\frac{\log\overline{W}^{1/p}(t)}{\log t}>0.
\end{eqnarray*}

\ 

On the other hand, we now have the following result \cite{s:s, n:n2}:

\begin{theorem}\label{sssa} For every $0<p<\infty$, 
$$
H:\Lambda^{p}(w)\longrightarrow \Lambda^{p}(w)\quad \mbox{is bounded }\quad\iff\quad  w\in B_p\cap B^*_\infty, 
$$
where the   $B^*_\infty$ class is defined by the following condition: for every $r>0$, 
$$
\int_0^r \frac 1t \int_0^t w(s) ds dt  \lesssim  \int_0^rw(s) ds. 
$$
\end{theorem}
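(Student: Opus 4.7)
The plan is to apply the Boyd theorem for rearrangement-invariant quasi-Banach spaces from \cite{ms:ms} to $X=\Lambda^p(w)$ (which is r.i.\ in the case $u=1$), yielding
\[
H:\Lambda^p(w)\to\Lambda^p(w)\ \text{is bounded}\iff \alpha_{\Lambda^p(w)}<1\ \text{and}\ \beta_{\Lambda^p(w)}>0,
\]
and then to translate each Boyd-index inequality into a concrete condition on $w$.

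For the upper index, Corollary~\ref{ari} already gives $\alpha_{\Lambda^p(w)}<1\iff w\in B_p$, so this half is free. For the lower index, I would first prove a $t\to 0^+$ analogue of Lemma~\ref{lll}: exploiting submultiplicativity and monotonicity of $\overline W$ together with $\overline W(1)=1$, a dyadic argument entirely parallel to the one in the proof of Lemma~\ref{lll} shows that $\beta_{\Lambda^p(w)}>0$ is equivalent to the existence of $\varepsilon>0$ such that $\overline W(t)\lesssim t^{p\varepsilon}$ for all $0<t<1$. Unwinding the definition of $\overline W$, this is in turn equivalent to the reverse-doubling estimate
\[
W(s)\lesssim \Bigl(\frac{s}{r}\Bigr)^{p\varepsilon}W(r)\qquad\text{for all}\ 0<s\leq r.
\]

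The final step is to identify this reverse-doubling inequality with $w\in B^*_\infty$. One implication is immediate, since inserting the power bound into $\int_0^r W(t)/t\,dt$ yields $W(r)/(p\varepsilon)$. For the converse, writing $g(r)=\int_0^r W(t)/t\,dt$, the hypothesis $B^*_\infty$ reads $g(r)\leq Krg'(r)$, and integrating this differential inequality gives $g(r)\gtrsim (r/r_0)^{1/K}g(r_0)$ for $r\geq r_0$; combining with the elementary lower bound $g(r_0)\geq W(r_0/2)\log 2$ and choosing $r_0=2s$ recovers $W(s)\lesssim (s/r)^{1/K}W(r)$ in the regime $s\leq r/2$, while $r/2<s\leq r$ is handled trivially by monotonicity of $W$. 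Chaining the three equivalences proves the theorem.

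The main obstacle I anticipate is this last equivalence, since extracting the pointwise power bound for $W$ from the integral $B^*_\infty$ condition requires the ODE-type integration above together with the auxiliary doubling-type estimate on $g$, and one must be careful to obtain the conclusion uniformly in $s$ and $r$. Once this step is in place, the rest is a mechanical concatenation of already-known equivalences feeding into the r.i.\ Boyd theorem.
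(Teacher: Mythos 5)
The paper does not prove Theorem~\ref{sssa}: it cites it from Sawyer \cite{s:s} and Neugebauer \cite{n:n2} and then, in Corollary~\ref{ari} and Proposition~\ref{beta2}, translates the two weight conditions into the Boyd indices. Your proposal goes in the other direction and actually proves the theorem from the r.i.\ quasi-Banach Boyd theorem of \cite{ms:ms}; this is the same overall scheme as the paper's Section 2, and your Steps 1--2 coincide with Corollary~\ref{ari}. The genuine divergence is in the equivalence $\beta_{\Lambda^p(w)}>0\iff w\in B^*_\infty$, which is the content of Proposition~\ref{beta2}. The paper first extracts the \emph{logarithmic} bound $\overline W(y)\lesssim (1+\log(1/y))^{-1}$ from $B^*_\infty$ by the two-line estimate $W(s)\log(r/s)\le\int_s^rW(t)\,dt/t\lesssim W(r)$, and then converts between log-bounds, the condition $\overline W(\lambda)<1$, and $\beta>0$ through the submultiplicativity Lemma~\ref{clickBstar}; the converse also passes through Lemma~\ref{clickBstar} to get $\overline W(y)\lesssim(1+\log(1/y))^{-2}$. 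You instead bypass the logarithmic intermediate and obtain the \emph{power} bound $W(s)\lesssim (s/r)^{1/K}W(r)$ directly from $B^*_\infty$ by integrating the differential inequality $g(r)\le Kr\,g'(r)$ for $g(r)=\int_0^r W(t)\,dt/t$ (a Gronwall argument, using that $g$ is absolutely continuous and positive), combined with the lower bound $g(2s)\ge W(s)\log 2$; your converse, plugging the power bound into $\int_0^r W(t)\,dt/t$, is then immediate. Both routes use submultiplicativity of $\overline W$ in some form — yours in Step~3 to pass between $\beta>0$ and a power bound, the paper's inside Lemma~\ref{clickBstar} — and both are correct. The paper's derivation of the log bound is a bit shorter than your Gronwall step, while your version has the advantage of staying at the level of power bounds throughout, making Step~4 trivial; either way the argument closes and you obtain a self-contained proof of a result the paper only cites.
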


In order to describe the conditions of Theorem~\ref{sssa} in terms of $\overline{W}$, and in view of Corollary~\ref{ari}, it suffices to prove the following result.

\begin{proposition}\label{beta2}
$w\in B^*_\infty$ if and only if
\begin{equation}\label{beta} 
\lim_{t \to 0^+}\frac{\log\overline{W}^{1/p}(t)}{\log t}>0.
\end{equation}

\end{proposition}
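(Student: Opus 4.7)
The plan is to reduce the limit condition to a polynomial bound on $\overline{W}$ and then verify that this polynomial bound is equivalent to the integral condition defining $B^*_\infty$, by arguments in the style of Lemma~\ref{lll}.

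First I would record the analogue of Lemma~\ref{lll} at $0$: since $\overline{W}$ is submultiplicative and $\overline{W}(\lambda)\le 1$ for $\lambda\in(0,1]$ (because $W$ is increasing), the same kind of argument as in Lemma~\ref{lll} shows that
$$
\lim_{\lambda\to 0^+}\frac{\log \overline{W}^{1/p}(\lambda)}{\log \lambda}>0 \quad\Longleftrightarrow\quad \exists\, \delta>0,\ \overline{W}(\lambda)\lesssim \lambda^\delta\ \text{for every } \lambda\in(0,1).
$$
Unravelling the definition of $\overline{W}$, the right-hand side is equivalent to $W(\lambda s)\lesssim \lambda^\delta W(s)$ for all $s>0$ and $\lambda\in(0,1)$. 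So it suffices to prove that this polynomial decay property is equivalent to $w\in B^*_\infty$.

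For the easier direction (polynomial decay $\Rightarrow B^*_\infty$), I would change variables $t=rs$ in the definition of $B^*_\infty$:
$$
\int_0^r \frac{W(t)}{t}\,dt=\int_0^1 \frac{W(rs)}{s}\,ds\lesssim W(r)\int_0^1 s^{\delta-1}\,ds\lesssim W(r),
$$
using the pointwise bound $W(rs)\le \overline{W}(s)W(r)\lesssim s^\delta W(r)$ for $s\in(0,1)$.

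The main obstacle is the reverse direction; here I would use a self-improving iteration in the spirit of Gehring. Set $F(r)=\int_0^r W(t)/t\,dt$. From $B^*_\infty$ one has $F(r)\le CW(r)$, and the elementary inequality
$$
F(2r)-F(r)=\int_r^{2r}\frac{W(t)}{t}\,dt\ge W(r)\log 2 \ge \frac{\log 2}{C}F(r)
$$
yields $F(2r)\ge \kappa F(r)$ with $\kappa:=1+\log 2/C>1$. Iterating gives $F(2^k r)\ge \kappa^k F(r)$, i.e.\ $F$ grows at least like $t^\delta$ with $\delta:=\log_2 \kappa>0$. Plugging back into $F(2^kr)\le CW(2^kr)$ and using $F(r)\ge W(r/2)\log 2$, I obtain
$$
W(2^kr)\ge c\,2^{k\delta}\, W(r/2)\qquad (k\ge 0,\ r>0),
$$
so after a reindexing $W(ts)\ge c'\,t^\delta W(s)$ for all $t\ge 1$ and $s>0$. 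Rewriting with $\lambda=1/t\in(0,1]$ gives $W(\lambda s)\le (1/c')\lambda^\delta W(s)$, hence $\overline{W}(\lambda)\lesssim \lambda^\delta$, which by the reduction in the first paragraph is exactly \eqref{beta}. The delicate point is extracting a pointwise lower bound for $W$ from the averaged estimate on $F$, which is why one needs the self-improvement $F(2r)\ge \kappa F(r)$ before converting back to $W$.
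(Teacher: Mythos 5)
Your proof is correct, and it takes a genuinely different route from the paper's. The paper works with a \emph{logarithmic} decay estimate: it shows directly from $w\in B^*_\infty$ that $W(s)(1+\log(r/s))\lesssim W(r)$, i.e.\ $\overline W(y)\lesssim (1+\log(1/y))^{-1}$, and then invokes a dedicated lemma (Lemma~\ref{clickBstar}) characterizing submultiplicative functions on $(0,1]$; for the converse it applies that same lemma to $\overline W^{\,1/2}$ in order to produce a \emph{square}-logarithmic decay $(1+\log(1/y))^{-2}$, which is integrable against $dt/t$, and this yields $B^*_\infty$. You instead reduce the limit condition at $0$ to a \emph{power} decay $\overline W(\lambda)\lesssim \lambda^\delta$, $\delta>0$, using the $(0,1]$-analogue of Lemma~\ref{lll} (legitimate, since $\overline W$ is increasing, submultiplicative, and $\le 1$ on $(0,1]$). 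The direction power decay $\Rightarrow B^*_\infty$ is then a one-line change of variables, with no squaring trick needed. The real content of your argument is the converse, $B^*_\infty\Rightarrow$ power decay, which you obtain by a Gehring-type self-improvement on $F(r)=\int_0^r W(t)/t\,dt$: the step $F(2r)\ge\kappa F(r)$ with $\kappa=1+\log 2/C>1$, followed by iteration, conversion back to $W$ via $W(r/2)\log 2\le F(r)\le C W(r)$, and finally rephrasing $W(ts)\ge c' t^\delta W(s)$ ($t\ge1$) as $\overline W(\lambda)\lesssim \lambda^\delta$. I checked the bookkeeping (the lower bound $F(r)\ge W(r/2)\log 2$, the reindexing $s=r/2$, and the extension from dyadic $t=2^m$ to all $t\ge 1$ by monotonicity) and it all goes through. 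In effect you have re-derived the well-known self-improving property of $B^*_\infty$ from scratch instead of importing the auxiliary Lemma~\ref{clickBstar}; the paper's version is a bit shorter because that lemma is already available, whereas yours is more self-contained and makes the power-law improvement explicit.
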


The proof of this result is based on the following lemma:

\begin{lemma}\label{clickBstar}
If $\varphi:(0,1]\to[0,1]$ is an increasing submultiplicative function, then the following statements are equivalent:
\begin{enumerate}
\item[(i)] $\displaystyle \varphi(\lambda)<1$ for some $\lambda\in(0,1)$,
\item[(ii)] $\displaystyle \varphi(x)\lesssim \frac{1}{1+\log(1/x)}$,
\item[(iii)] $\displaystyle \lim_{t \to 0^+}\frac{\log\varphi (t)}{\log t}>0$.
\end{enumerate}
\end{lemma}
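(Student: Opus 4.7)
The plan is to establish the cycle (i) $\Rightarrow$ (ii) $\Rightarrow$ (iii) $\Rightarrow$ (i) by showing that all three conditions are in fact equivalent to the sharper H\"older-type bound
\[
\varphi(x) \lesssim x^{\gamma} \quad \text{for some } \gamma > 0,
\]
which is the analogue on $(0,1]$ of the bound obtained in Lemma~\ref{lll} on $[1,\infty)$. The core step is to derive this power bound from the qualitative hypothesis (i); once it is available, the remaining implications are essentially automatic.

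For the main step (i) $\Rightarrow$ power bound, I would transpose the argument of Lemma~\ref{lll} to the interval $(0,1]$. Fix $\lambda \in (0,1)$ with $c := \varphi(\lambda) < 1$. For any $x \in (0,1)$, choose $k \in \mathbb{N}$ with $\lambda^{k+1} \le x < \lambda^{k}$. Monotonicity and submultiplicativity give $\varphi(x) \le \varphi(\lambda^{k}) \le c^{k}$, and the inequality $k \ge \log x / \log \lambda - 1$ converts this into $\varphi(x) \lesssim x^{\gamma}$ with $\gamma = \log(1/c)/\log(1/\lambda) > 0$.

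From the power bound, (ii) and (iii) follow immediately. For (ii), the function $x^{\gamma}(1+\log(1/x))$ is bounded on $(0,1]$ (it equals $1$ at $x=1$ and vanishes at $0^{+}$), hence $x^{\gamma}\lesssim 1/(1+\log(1/x))$, yielding $\varphi(x)\lesssim 1/(1+\log(1/x))$. For (iii), taking logarithms in $\varphi(x)\le Cx^{\gamma}$ and dividing by the negative quantity $\log x$ gives $\log \varphi(x)/\log x \ge \gamma + \log C/\log x$, so passing to the limit as $x\to 0^{+}$ produces a value $\ge \gamma > 0$.

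The reverse implications (ii) $\Rightarrow$ (i) and (iii) $\Rightarrow$ (i) are direct and do not really require the submultiplicativity: (ii) forces $\varphi(x)\to 0$ as $x\to 0^{+}$, so $\varphi(\lambda)<1$ at some $\lambda$; and (iii) furnishes $\eta > 0$ and $t_{0}>0$ with $\log\varphi(t)<\eta\log t<0$ for $t<t_{0}$. The only delicate point in the whole argument is the dyadic scaling in the power-bound step, and this is precisely where the hypothesis $\varphi(\lambda)<1$ (as opposed to merely $\varphi(\lambda)\le 1$) must be used quantitatively; beyond this the lemma requires no further ideas.
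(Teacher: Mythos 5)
Your proof is correct, and it differs from the paper's in a way worth noting. The paper proves (i)~$\Rightarrow$~(ii) and (i)~$\Rightarrow$~(iii) directly via the dyadic decomposition $x\in(\lambda^{k+1},\lambda^{k})$: for (ii) it bounds $\varphi(\lambda)^{k}\bigl(1+(k+1)\log(1/\lambda)\bigr)$ by a constant $A$ and then absorbs, and for (iii) it compares $\log\varphi(x)/\log x$ with $\frac{k}{k+1}\frac{\log\varphi(\lambda)}{\log\lambda}$. You instead isolate the intermediate power bound $\varphi(x)\lesssim x^{\gamma}$ (the $(0,1]$ analogue of Lemma~\ref{lll}), derive it from (i) via the same dyadic step, and then deduce both (ii) and (iii) from it; the cycle is closed by the elementary implications (ii)~$\Rightarrow$~(i) and (iii)~$\Rightarrow$~(i), exactly as in the paper. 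The computational kernel (submultiplicativity plus the dyadic scaling) is identical, but your factoring through the power bound has the small advantage of making explicit that all three conditions are equivalent to the formally stronger polynomial decay $\varphi(x)\lesssim x^{\gamma}$ --- a fact that in the paper is only implicit. One cosmetic remark: your opening sentence announces the cycle (i)~$\Rightarrow$~(ii)~$\Rightarrow$~(iii)~$\Rightarrow$~(i), but what you actually prove is (i)~$\Rightarrow$ power bound $\Rightarrow$ \{(ii),(iii)\} together with (ii)~$\Rightarrow$~(i) and (iii)~$\Rightarrow$~(i); the logic is sound, but the stated plan does not quite match the executed one.
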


\begin{proof} Clearly $(ii)$ implies $(i)$ and $(iii)$ implies $(i)$ as well. 

\noindent $(i)\Rightarrow (ii)$ Since $0<\lambda<1$, given $x\in (0, 1)$, there exists $k\in\mathbb N$ such that $x\in (\lambda^{k+1}, \lambda^k)$ and hence, since $\varphi(\lambda)<1$, we have that 
$$
A=\sup_{k\in\mathbb N}  \varphi(\lambda)^k \big(1+(k+1)\log(1/\lambda)\big)<\infty.
$$
Therefore, 
$$
\varphi(x)\le \varphi(\lambda^k)\le \varphi(\lambda)^k\le  \frac{A}{1+(k+1)\log(1/\lambda)}\lesssim  \frac{1}{1+\log(1/x)},
$$
as we wanted to see. 

\noindent
\noindent $(i)\Rightarrow (iii)$ If $x\in (\lambda^{k+1}, \lambda^k)$, we have that $\log \varphi(x) \le k \log \varphi(\lambda)$, and since $(k+1)\log\lambda\le\log x$, we get that
$$
\frac{\log\varphi (x)}{\log x}\ge \frac k{k+1}\frac{\log\varphi (\lambda)}{\log \lambda}\ge \frac{\log\varphi (\lambda)}{2\log \lambda}, 
$$
from which   the result follows. 
\end{proof}

\begin{proof}   [Proof of Proposition~\ref{beta2}]
If $w\in B^*_\infty$,  for every $s\le r$, 
$$
W(s) \log \frac rs\le \int_s^r \frac{W(t)} t dt\lesssim W(r), 
$$
and since $W$ is increasing, we deduce that  $W(s) (1+ \log \frac rs)\lesssim W(r)$, which implies that 
$$
\overline W (y) \lesssim \frac 1{1+\log \frac 1y},
$$ 
for every $0<y\le 1$. Thus, $\overline {W}^{1/p}$
satisfies the hypothesis of Lemma~\ref{clickBstar} and (\ref{beta}) follows.

 Conversely,  if (\ref{beta}) holds, and we write $c=\lim_{t \to 0}\frac{\log\overline{W}^{1/p}(t)}{\log t}$, it is easy  to see that, for $t$ small enough, 
 $\overline {W^{1/p}}(t)\le t^{c/2}$,  and thus there exists $\lambda<1$ satisfying that $\overline {W^{1/2}}(\lambda) <1$. Hence, by Lemma~ \ref{clickBstar}, 
$$
\int_0^r \frac{W(t)}t dt\lesssim W(r) \int_0^r   \Big(1+\log(r/t)\Big)^{-2} \frac{dt}t \lesssim W(r),
$$
and therefore, $w\in B^*_\infty$. 
\end{proof}

\begin{remark}Concerning the function $\overline W_u$, we observe that, if $u=1$ then, for every $t>1$, 
$$
\overline W(t)= \overline W_u(t).
$$
Indeed, it is enough to note that, given any finite family of disjoint intervals $(I_j)_{j=1}^J$  and measurable sets  $(S_j)_{j=1}^{J}$, such that $S_j\subset I_j$ and $|I_j|=t |S_j|$, for every $j$, it holds that 
$$
W\bigg (\Big|\bigcup_{j}S_j \Big|\bigg)= W\bigg (t \Big|\bigcup_{j}I_j \Big|\bigg). 
$$
Since $|\bigcup_{j}I_j|$ can be any positive real number,  by Remark \ref{igual} and the definition of $\overline{W}(t)$, it follows that $\overline W_u(t)$ and $\overline{W}(t)$ have to coincide.
\end{remark}

\section{The Lorentz-Shimogaki theorem for  $\Lambda^p_u(w)$}

As mentioned in the introduction, it was proved in \cite{lp:lp} that 
\begin{equation}\label{bla}
\alpha_{\Lambda^{p}_{u}(w)}= \lim_{t \to \infty}\frac{\log\overline{W}_u^{1/p} (t)}{\log t}.
\end{equation}
To justify the existence of the limit, the authors show that $ \overline W_u$ is pointwise equivalent to a submultiplicative function involving the local maximal function. In the following proposition, we will prove that the function $\overline W_u$ is in fact submultiplicative, which gives a direct proof of this result. With this aim, we need the following technical lemma.

\begin{lemma} \label{covering}
Let $I$ be an interval and let $S=\cup_{k=1}^{N}(a_k,b_k)$ be the union of disjoint intervals such that $S\subset I$.
Then, for every $t\in  [\,1,{|I|}/{|S|} \,]$ there exists a collection of disjoint subintervals $\{I_n\}_{n=1}^M$
satisfying that $S\subset \cup_{n} I_n$ and  such that, for every $n$, 
\begin{equation}\label{coveringform}
t|S\cap I_n|=|I_n|.
\end{equation}
\end{lemma}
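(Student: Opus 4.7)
The plan is to argue by induction on $N$, the number of component intervals comprising $S$.

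For the base case $N=1$, one has $S=(a_1,b_1)$, and the hypothesis $t|S|\le|I|$ permits a choice of $I_1\subset I$ of length $t(b_1-a_1)$ containing $(a_1,b_1)$; any interval $[s_1,s_1+t(b_1-a_1)]$ with $s_1$ in the (nonempty) range $[\max(\alpha,b_1-t(b_1-a_1)),\min(a_1,\beta-t(b_1-a_1))]$ works.

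For the inductive step with $N\ge 2$, the strategy is to choose an integer $m\in\{1,\ldots,N\}$ and form a single ``first container'' $I_1\subset I$ of length $tL_m:=t\sum_{k=1}^{m}(b_k-a_k)$ enclosing the first $m$ components $(a_1,b_1),\ldots,(a_m,b_m)$ of $S$ and disjoint from the remaining components; the residual problem on $[e_1,\beta]$ (where $e_1$ is the right endpoint of $I_1$) involves only $N-m<N$ components of $S$ and is handled by the inductive hypothesis. Such an $m$ must satisfy three requirements: (A) $tL_m\le a_{m+1}-\alpha$ (with the convention $a_{N+1}:=\beta$), so $I_1$ fits to the left of $(a_{m+1},b_{m+1})$; (B) $tL_m\ge b_m-a_1$, so $I_1$ can enclose all of $(a_1,b_1),\ldots,(a_m,b_m)$; and (R) the residual inequality $t\sum_{k>m}(b_k-a_k)\le\beta-e_1$ for some admissible right endpoint $e_1$.

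To exhibit such an $m$, I would take $m^{*}=\min\{m\ge 1:tL_m\le a_{m+1}-\alpha\}$, which is well-defined because (A) at $m=N$ reduces exactly to the standing hypothesis $t|S|\le|I|$. The minimality of $m^{*}$ together with a short telescoping computation yields (B) at $m^{*}$, and when $m^{*}>1$, condition (R) is automatic because placing $I_1$ with left endpoint $\alpha+tL_{m^{*}}-tL_{m^{*}}$ leaves $|I|-tL_{m^{*}}\ge t\sum_{k>m^{*}}(b_k-a_k)$ of slack on the right. The delicate case is $m^{*}=1$ with (R) failing, corresponding to the simultaneous inequalities $(t-1)(b_1-a_1)<a_1-\alpha$ and $t\sum_{k\ge 2}(b_k-a_k)>\beta-b_1$; here one passes instead to the smallest $m\ge 2$ satisfying (A), and verifies that (A), (B), and (R) hold simultaneously at that index by using the failure of (R) at $m=1$ to locate additional slack at larger values of $m$.

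The main obstacle I expect is precisely this boundary case analysis: showing that whenever the greedy choice $m^{*}=1$ fails to leave enough room for the residual, enlarging $m$ to absorb additional components into $I_1$ always succeeds. Once $m$ is fixed and $I_1$ admissibly placed, the inductive hypothesis applied to the residual interval produces the remaining containers, and concatenation gives the desired collection $\{I_n\}$.
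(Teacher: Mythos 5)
Your overall framework — induction on $N$ and peeling off a first container $I_1$ that encloses an initial block of complete components — is the same as the paper's. However, your specific rule for the ``delicate case'' ($m^*=1$ with (R) failing), namely to pass to the \emph{smallest} $m\ge 2$ satisfying (A), provably fails. Consider $I=(0,150)$, $S=(50,51)\cup(100,101)\cup(149,150)$, $t=50=|I|/|S|$. Here $tL_m\le a_{m+1}-\alpha$ for every $m\in\{1,2,3\}$, so $m^*=1$; moreover $(t-1)(b_1-a_1)=49<50=a_1-\alpha$ and $t\sum_{k\ge2}(b_k-a_k)=100>99=\beta-b_1$, so (R) fails at $m^*=1$ and we are in your delicate case. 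Your prescription says to take $m=2$. But at $m=2$, any admissible container $I_1=[s_1,s_1+100]$ must satisfy $s_1\ge b_2-tL_2=1$ (to enclose the second component) and $s_1\le a_3-tL_2=49$ (to avoid the third), yet the residual inequality forces $s_1\le\beta-t|S|=0$: the feasible range is empty, and (R) fails at $m=2$ as well. Only $m=3=N$ works here (giving $I_1=I$). So ``smallest $m\ge 2$ with (A)'' is not the right fallback, and the telescoping/minimality arguments you sketch do not supply the missing slack.

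The paper sidesteps this case analysis with a cleaner dichotomy plus a continuity argument. After normalizing $I=(0,|I|)$: if $|I|-t|S|\le a_1$ (as in the example above), a single interval $I_1=(|I|-t|S|,|I|)\supset S$ already satisfies \eqref{coveringform}. Otherwise $t|S|<|I|-a_1$, so one may discard $(0,a_1)$, shift so that $a_1=0$, and consider $I_1=(0,c)$ where $c$ is produced by the intermediate value theorem applied to $c\mapsto t|S\cap(0,c)|-c$ (nonnegative at $c=b_1$, negative at $c=|I|$). One then checks that $c\notin S$, so $(0,c)$ captures only complete components; the residual $[c,|I|)$ automatically has room for the remaining $\le N-1$ components and the induction closes. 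The continuity argument finds the correct ``size'' of $I_1$ without having to guess the index $m$, which is precisely where your approach breaks down.
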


\begin{proof}
Without loss of generality we can assume that $I=(0, |I|)$ and also that $a_1<a_2<\dots<a_N$.
First observe that if $J=\cup I_n$ we should in particular obtain  $t|S|=|J|$ applying \eqref{coveringform}.
We use induction in $N$. Clearly it is true for  $N=1$. Indeed, it suffices to consider $0\leq c\leq a_1<b_1\leq d\leq |I|$
such that $t(b_1-a_1)=d-c$. Suppose that the results holds for all $k<N$. We will prove that it also holds for $k=N$.
 
 \noindent
{Case I:} If  $|I|-t|S|\leq a_1$, then  it suffices to consider $I_1=(|I|-t|S|, |I|)=J$. 
\noindent
{Case II:} If $a_1 < |I|-t|S|$, set  $\bar{I}=(a_1, |I|)$. Observe that $t|S|<|\bar{I}|$ and $S\subset \bar{I}$.
Hence in this case we could assume, without loss of generality, that $a_1=0$.  Let now $I_1=(0,c)$ such that $b_1\leq c\leq |I|$
and $t |S\cap I_1|=c=|I_1|$.  Note that $c\notin S$. In fact, suppose that there exists $S_m=(a_m, b_m)$ such that $c\in S_m$.
Then,
$t|S\cap [0,a_m)|>|[0,a_m)|$  which implies that $t|S\cap [0,c)|>|[0,c)|=|I_1|$, which is a contradiction. Therefore, we obtain that 
$$
t|S\cap I_1|+t|S\cap [c,|I|)|=t|S|<|I|=|I_1|+|[c,|I|)|=t|S\cap I_1|+|[c,|I|)|,
$$
and consequently $t|S\cap [c,|I|)|<|[c,|I|)|$.  Then, since $[c,|I|)$ is the union of at most $N-1$  intervals $\{(a_k,b_k)\}_k$, 
we apply the inductive hypothesis to the intervals $[c,|I|)$  and the set $S\cap [c,|I|)$ and  we obtain the intervals
$I_2, \dots, I_M$ such that \eqref{coveringform} is satisfied.

\end{proof}

\begin{lemma}\label{subsub}
The function $\overline{W}_u$ is submultiplicative on $[1, \infty)$. 
\end{lemma}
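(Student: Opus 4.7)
The plan is to prove submultiplicativity directly from the definition by inserting an intermediate configuration obtained from Lemma~\ref{covering}. Fix $s,t\geq 1$, and take any admissible configuration $\{I_j,S_j\}_j$ with $I_j$ disjoint intervals, $S_j\subseteq I_j$ a finite union of intervals, and $|I_j|=st\,|S_j|$ for every $j$ (using Remark~\ref{igual}). I want to bound the corresponding quotient by $\overline{W}_u(s)\overline{W}_u(t)$.

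First, since $t\leq st=|I_j|/|S_j|$, Lemma~\ref{covering} applied to each pair $(I_j,S_j)$ with scaling factor $t$ produces a finite family of disjoint subintervals $\{I_{j,n}\}_n$ of $I_j$ with $S_j\subseteq \bigcup_n I_{j,n}$ and $|I_{j,n}|=t\,|S_j\cap I_{j,n}|$ for each $n$. Set $T_j:=\bigcup_n I_{j,n}$; then $|T_j|=t|S_j|$, so $|I_j|=s|T_j|$, and $T_j$ is a finite union of intervals contained in $I_j$. Since the $I_j$'s are pairwise disjoint, the whole family $\{I_{j,n}\}_{j,n}$ is also pairwise disjoint.

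Next, I split the ratio telescopically:
$$
\frac{W\!\left(u\!\left(\bigcup_j I_j\right)\right)}{W\!\left(u\!\left(\bigcup_j S_j\right)\right)}
\;=\;\frac{W\!\left(u\!\left(\bigcup_j I_j\right)\right)}{W\!\left(u\!\left(\bigcup_j T_j\right)\right)}
\cdot
\frac{W\!\left(u\!\left(\bigcup_j T_j\right)\right)}{W\!\left(u\!\left(\bigcup_j S_j\right)\right)}.
$$
The first factor corresponds to the admissible configuration consisting of the disjoint intervals $I_j$ and the finite unions of intervals $T_j\subseteq I_j$ with $|I_j|=s|T_j|$, so by Remark~\ref{igual} it is bounded by $\overline{W}_u(s)$. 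For the second factor, I regard $\{I_{j,n}\}_{j,n}$ as the family of disjoint intervals and $\widetilde S_{j,n}:=S_j\cap I_{j,n}$ as the corresponding subsets (each a finite union of intervals), satisfying $|I_{j,n}|=t|\widetilde S_{j,n}|$. Since $S_j\subseteq T_j$, we have $\bigcup_{j,n}\widetilde S_{j,n}=\bigcup_j S_j$ and $\bigcup_{j,n} I_{j,n}=\bigcup_j T_j$, so the second factor is bounded by $\overline{W}_u(t)$. Combining and taking the supremum over configurations on the left gives $\overline{W}_u(st)\leq \overline{W}_u(s)\overline{W}_u(t)$.

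The only nontrivial step is the very first one: producing the intermediate family $T_j$ that simultaneously splits the scaling $st$ into $s$ (for the outer pair $I_j\supseteq T_j$) and $t$ (for the inner pairs $I_{j,n}\supseteq \widetilde S_{j,n}$). This is exactly what Lemma~\ref{covering} delivers, so the main obstacle has already been handled by the technical lemma; the rest is bookkeeping about disjointness and the fact that the members of the resulting configurations are finite unions of intervals, which is needed to invoke Remark~\ref{igual}.
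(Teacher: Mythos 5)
Your proof is correct and follows essentially the same route as the paper: both apply Lemma~\ref{covering} with the smaller scaling factor to manufacture an intermediate family $T_j$ (the paper's $J_j$) wedged between $S_j$ and $I_j$, then telescope the ratio and bound the two factors by $\overline{W}_u(s)$ and $\overline{W}_u(t)$ respectively via Remark~\ref{igual}. If anything, your write-up is slightly more explicit about why the $\widetilde S_{j,n}$ are finite unions of intervals and why the two factors are admissible configurations, but the underlying argument is the paper's.
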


 \begin{proof} 
 Consider a finite family of intervals $I_j$, and measurable sets $S_j\subseteq I_j$ which are finite union of intervals
such that $|I_j|=\la\mu|S_j|$. 
Then, we can apply Lemma~\ref{covering}
and for each $j$ obtain a set $J_j$ such that it is a union of a finite number of pairwise disjoint  intervals, that we call $J_{ji}$:
$$
S_j\subseteq J_j,\quad \la|S_j\cap J_{ji}|=|J_{ji}|,
          \quad J_j\subseteq {I}_j,\qquad\mbox{ and } \mu |J_j|=|{I}_j|.
$$
So, we have that
\begin{align*}
\frac{W\left(u\Big(\bigcup_{j}I_j\Big)\right)}{W\left(u\Big(\bigcup_{j}S_j\Big)\right)}&=
\frac{W\left(u\Big(\bigcup_{j}I_j\Big)\right)}{ W\left(u\Big(\bigcup_{j} S_j\Big)\right)}
\frac{W\left(u\Big(\bigcup_{j} J_j\Big)\right)}{W\left(u\Big(\bigcup_{j}J_j\Big)\right)}
\leq \overline{W}_u(\la)\overline{W}_u(\mu).
\end{align*}
Therefore, taking supremum over all  possible choices of intervals $I_j$ and measurable subsets $S_j$ such that
$S_j\subseteq I_j$ and $|I_j|= \la\mu|S_j|$, we get that
$\overline{W}_u(\la\mu)\leq \overline{W}_u(\la)\overline{W}_u(\mu)$. \end{proof} 

\begin{definition} Let us define the upper Boyd index for the space $\Lambda^{p}_{u}(w)$ as 
$$
\alpha_{\Lambda^{p}_{u}(w)}=  \lim_{t \to \infty}\frac{\log\overline{W}_u^{1/p}(t)}{\log t}. 
$$
\end{definition}

\begin{remark}
In \cite{lp:lp} the generalized upper Boyd index was introduced in terms of the local maximal operator. Recall that, the local maximal operator $m_\la$ of a measurable function $f$ is defined by
$$
m_\la f(x)=\sup_{x\in I} (f\chi_I)^*(\la|I|),
$$
where $\la\in(0,1)$. In terms of this operator, the upper Boyd index was defined by:
$$
\lim_{\la\to 0^+}\frac{\log \|m_\la\|_{\Lambda^{p}_{u}(w)}}{\log 1/\la}. 
$$
In the original definition for r.i. spaces, the function $t\mapsto \|D_t\|$ is submultiplicative, which justifies the existence of the limit. In the case of the   local maximal operator, it is not known weather or not the function $\la\mapsto \|m_\la\|$ is submultiplicative. In the case of weighted Lorentz spaces, it can be proved that (see \cite[ Lemma 5.1]{lp:lp} and the proof of Theorem 3.2.4 in \cite{crs:crs})
$$
\overline{W}_u(1/\la)\leq \|m_\la\|_{\Lambda^{p}_{u}(w)}^{\, p}\leq \, \overline{W}_u(2/\la).
$$ 
\end{remark}

We can now prove the following extension of the Lorentz-Shimogaki theorem: 

\begin{theorem}\label{popo} \cite{lp:lp}
If $0<p<\infty$, then
$$
M:\Lambda^{p}_{u}(w)\longrightarrow \Lambda^{p}_{u}(w)
$$
is bounded if and only if $\alpha_{\Lambda^{p}_{u}(w)}<1$. 
\end{theorem}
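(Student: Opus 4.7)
The plan is to assemble the proof from three ingredients already established in the excerpt: Theorem~\ref{strongmaximal} (plus the reformulation in Remark~\ref{rem}(i)), the submultiplicativity of $\overline{W}_u$ proved in Lemma~\ref{subsub}, and the abstract dichotomy for submultiplicative functions in Lemma~\ref{lll}. The structure is essentially a bookkeeping argument.

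First I would note that $\overline{W}_u:[1,\infty)\to[1,\infty)$ is increasing: taking $t\le t'$, any admissible family $(I_j,S_j)$ with $|I_j|<t|S_j|$ also satisfies $|I_j|<t'|S_j|$, so the supremum defining $\overline{W}_u(t')$ is taken over a larger set. Combined with Lemma~\ref{subsub}, the function $\varphi:=\overline{W}_u^{1/p}$ is an increasing submultiplicative function on $[1,\infty)$.

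Next I would invoke Lemma~\ref{lll} applied to $\varphi$. By definition,
\[
\alpha_{\Lambda^{p}_{u}(w)}=\lim_{t\to\infty}\frac{\log\varphi(t)}{\log t},
\]
so $\alpha_{\Lambda^{p}_{u}(w)}<1$ is equivalent to the existence of some $\gamma<1$ with $\varphi(t)\lesssim t^{\gamma}$ for every $t>1$, that is,
\[
\overline{W}_{u}(t)\lesssim t^{p\gamma}\qquad (t>1),
\]
with $q:=p\gamma\in(0,p)$. Conversely, any pointwise bound $\overline{W}_u(t)\lesssim t^q$ with $q<p$ yields $\alpha_{\Lambda^{p}_{u}(w)}\le q/p<1$ directly from the definition (no submultiplicativity needed for this direction).

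Finally, I would close the loop with Theorem~\ref{strongmaximal} together with Remark~\ref{rem}(i), which states that the Sawyer-type condition~\eqref{raposo} is exactly the assertion that $\overline{W}_u(t)\lesssim t^q$ for some $q\in(0,p)$. Chaining the two equivalences gives
\[
M:\Lambda^{p}_{u}(w)\longrightarrow \Lambda^{p}_{u}(w)\text{ bounded}\iff \exists\,q<p:\ \overline{W}_u(t)\lesssim t^q\iff \alpha_{\Lambda^{p}_{u}(w)}<1,
\]
which is the desired conclusion. There is no serious obstacle: the only nontrivial analytic content (submultiplicativity of $\overline{W}_u$ and the submultiplicative-function lemma) has already been isolated, so the theorem reduces to combining them.
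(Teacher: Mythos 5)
Your proof is correct and follows the same route as the paper: combine Theorem~\ref{strongmaximal} with Remark~\ref{rem}(i) to reduce boundedness of $M$ to the pointwise bound $\overline{W}_u(t)\lesssim t^q$ for some $q<p$, and then invoke the submultiplicativity of $\overline{W}_u$ (Lemma~\ref{subsub}) together with Lemma~\ref{lll} to show this is equivalent to $\alpha_{\Lambda^p_u(w)}<1$. You have merely spelled out the bookkeeping (monotonicity of $\overline{W}_u$, the substitution $q=p\gamma$) that the paper leaves implicit.
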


\begin{proof} By Theorem~\ref{strongmaximal} and Remark \ref{rem}, the boundedness of $M$ is equivalent to $\overline W_u(t)\lesssim t^{q}$ for some $q<p$, and since  $\overline W_u$ is increasing and submultiplicative, the result follows from Lemma~\ref{lll}. 
\end{proof}

\section{The Boyd  theorem for  $\Lambda^p_u(w)$}

  In what follows 
$$
H:\Lambda^{p}_u(w)\longrightarrow \Lambda^{p}_u(w)
$$
will indicate that, for every $f\in \Lambda^{p}_u(w)$, $Hf(x)$ is well defined at almost every point $x\in\mathbb R$,  and 
$$
||Hf||_{\Lambda^{p}_u(w)}\lesssim ||f||_{\Lambda^{p}_u(w)},
$$
and similarly for $H:\Lambda^{p}_u(w)\longrightarrow \Lambda^{p, \infty}_u(w)$. 
 
Our next goal is to introduce the definition of the lower Boyd index for $\Lambda^{p}_{u}(w)$ and prove the corresponding Boyd theorem. 

\begin{definition} If $t\in (0,1]$, we define 
\begin{equation*}
\underline{{W}_{u}}(t):=\sup \left\{\frac{W\left(u\Big(\bigcup_{j}S_j\Big)\right)}{W\left(u\Big(\bigcup_{j}I_j\Big)\right)}:\,
            S_j\subseteq I_j  \mbox { and }  |S_j|<t|I_j|, \mbox{  for every } j \right\},
\end{equation*}
where $I_{j}$ are disjoint  intervals and all unions are finite.
\end{definition}

As in Remark \ref{igual} we can substitute $|S_j|<t|I_j|$ by an equality $|S_j|=t|I_j|$ and assume that $S_j$ is the finite union of  intervals.  

With this definition, we can prove  the following result (similar to Lemma~\ref{subsub}). 

\begin{proposition}
The function $\underline{{W}_{u}}$ is submultiplicative in $[0,1]$. 
\end{proposition}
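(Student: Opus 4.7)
The plan is to mirror the proof of Lemma~\ref{subsub}, using the same covering tool (Lemma~\ref{covering}) but with the parameter chosen to build an intermediate set that sits \emph{between} $S_j$ and $I_j$. The paper already notes (just after the definition of $\underline{W_u}$) that we may assume in the supremum that $|S_j|=t|I_j|$ and each $S_j$ is a finite union of intervals, so it suffices to bound
$$
\frac{W\!\left(u\!\left(\bigcup_{j}S_j\right)\right)}{W\!\left(u\!\left(\bigcup_{j}I_j\right)\right)}
$$
by $\underline{W_u}(\la)\underline{W_u}(\mu)$ under the normalization $|S_j|=\la\mu|I_j|$, for arbitrary pairwise disjoint intervals $I_j$ and finite unions of intervals $S_j\subseteq I_j$.

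For each $j$, I would apply Lemma~\ref{covering} to $S_j\subset I_j$ with parameter $1/\la$. This is legitimate since $1/\la\le 1/(\la\mu)=|I_j|/|S_j|$ (because $\mu\le 1$). The lemma returns pairwise disjoint subintervals $J_{ji}\subseteq I_j$ whose union $J_j:=\bigcup_i J_{ji}$ contains $S_j$ and satisfies $|S_j\cap J_{ji}|=\la |J_{ji}|$ for every $i$. Summing over $i$ and using $S_j\subseteq J_j$ gives $|J_j|=\la |S_j|/\la=\mu|I_j|$. Since the $I_j$ are pairwise disjoint and each $J_j\subseteq I_j$, the entire doubly-indexed family $\{J_{ji}\}_{j,i}$ consists of pairwise disjoint intervals.

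Now I would split the quotient by inserting $W\!\left(u\!\left(\bigcup_j J_j\right)\right)$:
$$
\frac{W\!\left(u\!\left(\bigcup_{j}S_j\right)\right)}{W\!\left(u\!\left(\bigcup_{j}I_j\right)\right)}
=\frac{W\!\left(u\!\left(\bigcup_{j,i}(S_j\cap J_{ji})\right)\right)}{W\!\left(u\!\left(\bigcup_{j,i} J_{ji}\right)\right)}\cdot
\frac{W\!\left(u\!\left(\bigcup_{j} J_j\right)\right)}{W\!\left(u\!\left(\bigcup_{j} I_j\right)\right)}.
$$
The first factor matches the definition of $\underline{W_u}(\la)$: the $J_{ji}$ are disjoint intervals, each $S_j\cap J_{ji}$ is a finite union of intervals contained in $J_{ji}$, and $|S_j\cap J_{ji}|=\la|J_{ji}|$. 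The second factor matches $\underline{W_u}(\mu)$: the $I_j$ are disjoint intervals, each $J_j\subseteq I_j$ is a finite union of intervals, and $|J_j|=\mu|I_j|$. Taking the supremum over all admissible families yields $\underline{W_u}(\la\mu)\le \underline{W_u}(\la)\underline{W_u}(\mu)$.

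The only delicate point is the choice $t=1/\la$ in the covering lemma and the bookkeeping of the length identities; once the right intermediate scale is identified the rest is a direct transcription of the argument for $\overline{W}_u$. No new covering lemma or new weight condition is needed; the main (mild) obstacle is simply checking that the intermediate families produced by Lemma~\ref{covering} really are admissible test configurations for $\underline{W_u}$ at the two scales $\la$ and $\mu$, which is the content of the length identities above.
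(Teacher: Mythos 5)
Your proof is correct and follows exactly the route the paper indicates (``similar to Lemma~\ref{subsub}''): apply Lemma~\ref{covering} at the intermediate scale to build sets $J_j$ with $S_j\subseteq J_j\subseteq I_j$, then split the quotient and recognize the two factors as admissible test families for $\underline{W_u}(\la)$ and $\underline{W_u}(\mu)$. The only nontrivial adaptation is choosing $t=1/\la$ in Lemma~\ref{covering} and verifying $1/\la\in[1,|I_j|/|S_j|]$, which you do correctly.
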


\begin{lemma}\label{mayor1} If $u\in A_\infty$, there exist $C_u>0$ and $\alpha>0$ such that, for every $0<t<1$, 
$$
\overline W(t)\le  \underline{{W}_{u}}(C_ut^\alpha).
$$
\end{lemma}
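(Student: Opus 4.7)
The plan is to reduce to single-interval configurations and convert between Lebesgue and $u$-measure using $A_\infty$. Since, for the unweighted functional, $\overline W(t)=\sup_{s>0}W(st)/W(s)$, it suffices, for each fixed $s>0$, to exhibit an admissible pair $(I_s,S_s)$ in the definition of $\underline{W_u}(C_u t^\alpha)$ with $u(I_s)=s$ and $u(S_s)=ts$. The translation between the two measures will be supplied by a classical consequence of $u\in A_\infty$.

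The key analytic input I will extract from $u\in A_\infty$ is the \emph{reverse} power estimate
$$
\frac{|S|}{|I|}\le C_u\bigg(\frac{u(S)}{u(I)}\bigg)^{\alpha}
$$
for every interval $I$ and every measurable $S\subseteq I$, with constants $C_u>0$ and $\alpha>0$ depending only on $u$. To derive it, pick $p\ge 1$ with $u\in A_p$ and apply H\"older's inequality with exponents $p$ and $p'$ to the factorization $1=u^{1/p}u^{-1/p}$ on $S$, obtaining $|S|\le u(S)^{1/p}\sigma(S)^{(p-1)/p}$ with $\sigma=u^{-1/(p-1)}$. Combining $\sigma(S)\le\sigma(I)$ with the $A_p$-bound $\sigma(I)^{p-1}\lesssim |I|^p/u(I)$ yields the inequality with $\alpha=1/p$. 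This is the only nontrivial step; it is classical and can alternatively be cited from \cite{gr:gr}.

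To build the configurations, I first note $u(\mathbb R_+)=\infty$: otherwise, testing the forward $A_\infty$-estimate on $I_N=[0,N]$ with the fixed set $S=[0,1]$ would force $u([0,1])/u(\mathbb R_+)=0$ as $N\to\infty$, contradicting $u>0$ almost everywhere. Since $r\mapsto u([0,r])$ is continuous and strictly increasing on $[0,\infty)$ with range $[0,\infty)$, I can pick $I_s=[0,r]$ with $u(I_s)=s$ and a sub-interval $S_s=[0,r']\subseteq I_s$ with $u(S_s)=ts$. The reverse power estimate gives $|S_s|\le C_u t^\alpha |I_s|$, so $(I_s,S_s)$ is admissible for $\underline{W_u}(C_u t^\alpha)$ (in the ``$\le$'' form; one may slightly enlarge $S_s$ to match the equality form mentioned just after the definition of $\underline{W_u}$, which only increases the ratio). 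Hence
$$
\frac{W(st)}{W(s)}=\frac{W(u(S_s))}{W(u(I_s))}\le\underline{W_u}(C_u t^\alpha),
$$
and taking the supremum over $s>0$ yields $\overline W(t)\le\underline{W_u}(C_u t^\alpha)$. The main obstacle is the reverse $A_\infty$ estimate, the remaining steps being essentially bookkeeping.
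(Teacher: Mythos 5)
Your proof is correct and follows essentially the same route as the paper: both invoke the reverse power estimate $\tfrac{|E|}{|I|}\le C_u\big(\tfrac{u(E)}{u(I)}\big)^{\alpha}$ for $u\in A_\infty$, then for each $s>0$ choose $I$ with $u(I)=s$ and a subset with $u$-measure $ts$, and read off $W(ts)/W(s)\le\underline{W_u}(C_u t^\alpha)$. You additionally spell out the derivation of the power estimate from $A_p$ and justify that an interval of prescribed $u$-measure exists; the paper simply cites the estimate as known and leaves those details implicit.
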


\begin{proof} It is known that  if $u\in A_\infty$,  there exist $C_u>0$ and $\alpha>0$ such that, for every interval $I$ and every measurable set $E\subset I$, 
$$
\frac{|E|}{|I|}\le C_u \bigg(\frac{u(E)}{u(I)}\bigg)^{\alpha}. 
$$
Now, let $0<t<1$ and let $s>0$. Let $I$ be such that $u(I)=s$ and set $E\subset I$ such that $u(E)=ts$. Then,
$$
\frac{W(ts)}{W(s)}=\frac{W(u(E))}{W(u(I))}\le \underline{{W}_{u}}\bigg(\frac{|E|}{|I|}\bigg)\le 
\underline{{W}_{u}}(C_u t^\alpha),
$$
and the result follows taking the supremum in $s>0$. 
\end{proof}
  
 By analogy with the case of the upper index, we give the following definition (which agrees in the case $u=1$ with the classical one). 
 
 \begin{definition} 
We define the generalized lower Boyd index associated to $\Lambda^p_u(w)$ as
$$
\beta_{\Lambda^p_u(w)}:= \lim_{t\to 0^+}\frac{\log\underline{{W}_{u}}^{1/p}(t)}{\log t}.
$$
\end{definition}

\begin{theorem} \label{boyd1} Let $0<p<\infty$. If 
$$
H:\Lambda^{p}_u(w)\longrightarrow \Lambda^{p}_u(w)
$$
is bounded, then $
 \beta_{\Lambda^p_u(w)}>0.$
\end{theorem}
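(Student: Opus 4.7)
My plan is to establish $\beta_{\Lambda^p_u(w)}>0$ by converting it, via Lemma~\ref{clickBstar} applied to the increasing submultiplicative function $\underline{W_u}^{1/p}$, into the assertion that $\underline{W_u}(\lambda_0)<1$ for some $\lambda_0\in(0,1)$. I will produce such a $\lambda_0$ by feeding into the boundedness inequality $\|Hf\|_{\Lambda^p_u(w)}\lesssim \|f\|_{\Lambda^p_u(w)}$ characteristic functions built directly from configurations realizing $\underline{W_u}(\lambda)$.

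Fix disjoint intervals $(I_j)$ and subsets $S_j\subset I_j$ with $|S_j|=\lambda|I_j|$, each $S_j$ a finite union of intervals (as permitted by the remark after the definition of $\underline{W_u}$). I will take
\begin{equation*}
f:=\chi_{\bigcup_{j}(I_j\setminus S_j)},
\end{equation*}
so that $\|f\|_{\Lambda^p_u(w)}^{\,p}\le W(u(\bigcup_j I_j))$, and exploit the logarithmic blow-up of $H\chi_{I_j\setminus S_j}$ near the boundary of $S_j$ to obtain the pointwise lower bound
\begin{equation*}
|Hf(x)|\gtrsim \log(1/\lambda)\qquad\text{for } x\in\bigcup_j S_j.
\end{equation*}
The model case is transparent: for $I=(0,1)$ and $S=(0,\lambda)$ a direct computation gives
\begin{equation*}
|H\chi_{I\setminus S}(x)|=\frac{1}{\pi}\log\frac{1-x}{\lambda-x}\ge \frac{\log(1/\lambda)}{\pi}\qquad (x\in S).
\end{equation*}
Combining the pointwise bound with the boundedness of $H$ will then yield
\begin{equation*}
\bigl(\log(1/\lambda)\bigr)^{p}\,W\!\left(u\!\left(\bigcup_{j}S_j\right)\right)\lesssim W\!\left(u\!\left(\bigcup_{j}I_j\right)\right),
\end{equation*}
so that choosing $\lambda$ small enough forces $\underline{W_u}(\lambda)<1$.

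The hard part will be justifying the pointwise bound in a general configuration, since the model calculation uses two idealizations that can fail: that $S_j$ sits at an endpoint of $I_j$, and that the cross-terms $H\chi_{I_k\setminus S_k}(x)$ with $k\neq j$ do not cancel the main contribution. To address the first issue, I will split each $S_j$ into its left and right halves of equal measure, keep the half that gives the better estimate, and absorb the constant loss via the $\Delta_2$-condition on $W$; alternatively, one can run a covering argument in the spirit of Lemma~\ref{covering}. For the second issue, I plan to pre-sparsify $(I_j)$ by a Vitali-type selection so that the retained intervals are pairwise separated on the scale of their own lengths, making the off-diagonal contributions a uniformly bounded perturbation of the main term on each $S_j$. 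Combining these reductions with the model computation should complete the proof.
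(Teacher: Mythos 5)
Your reduction to showing $\underline{W_u}(\lambda_0)<1$ for some $\lambda_0\in(0,1)$ via Lemma~\ref{clickBstar} is correct, but the key pointwise lower bound $|Hf(x)|\gtrsim\log(1/\lambda)$ for $x\in\bigcup_j S_j$, with $f=\chi_{\bigcup_j(I_j\setminus S_j)}$, is false in general, and your proposed fixes do not repair it. Already for a single interval $I=(-1,1)$ and $S=(-\lambda/2,\lambda/2)$ one has, at the center $x=0$ of $S$,
$$
H\chi_{I\setminus S}(0)=\frac{1}{\pi}\Bigl(\log\frac{1}{\lambda/2}+\log\frac{\lambda/2}{1}\Bigr)=0,
$$
and $|H\chi_{I\setminus S}|$ remains $O(1)$, uniformly in $\lambda$, on a fixed fraction of $S$: the logarithmic singularity produced by one component of $I\setminus S$ is cancelled by the opposite-signed singularity from the other component. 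This cancellation appears whenever $S_j$ lies in the interior of $I_j$; your model computation uses crucially that $S=(0,\lambda)$ abuts an endpoint of $I=(0,1)$.

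Splitting $S_j$ into halves does not move either half to $\partial I_j$, so the cancellation persists; a Vitali-type separation of the $I_j$'s addresses only cross-terms, whereas the failure is already present for a single $I_j$; and a covering in the spirit of Lemma~\ref{covering} controls the ratio $|S_j|/|I_j|$, not the location of $S_j$ within $I_j$, which is what drives the cancellation. Nor can you slide $S_j$ to an endpoint of $I_j$ or replace it by a comparably sized boundary piece: at this stage you have no $A_\infty$ information on $u$, so $u$-measures are not controlled by Lebesgue measures. The paper's proof circumvents this entirely by quoting \cite{acs2:acs2} for the fact that $H$ bounded on $\Lambda^p_u(w)$ forces $u\in A_\infty$ and $w\in B^*_\infty$, after which Proposition~\ref{beta2} and Lemma~\ref{mayor1} give $\underline{W_u}(0+)=0$ and Lemma~\ref{clickBstar} concludes; the delicate test-function geometry is handled in \cite{acs2:acs2}, with more care than the naive choice $f=\chi_{\cup(I_j\setminus S_j)}$ permits.
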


\begin{proof}  It was proved in \cite{acs2:acs2} that the boundedness of $H$ on $\Lambda^{p}_u(w)$ implies  that $u\in A_\infty$ and  $w\in B^*_\infty$, and hence the result now follows from Proposition~\ref{beta2} and Lemma~\ref{mayor1}. 
\end{proof}

\begin{theorem}\label{boyd2} Let $0<p<\infty$.  If
$$
\alpha_{\Lambda^{p}_{u}(w)}<1\quad\mbox{and }\quad  \beta_{\Lambda^p_u(w)}>0,
$$
then
$$
H:\Lambda^{p}_u(w)\longrightarrow \Lambda^{p}_u(w)
$$
is bounded.
\end{theorem}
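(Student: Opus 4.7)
The plan is to turn the two Boyd-index hypotheses into concrete weight conditions and then invoke the known characterization of $H$ on $\Lambda^p_u(w)$ coming from \cite{acs2:acs2}. Since $\overline{W}_u^{1/p}$ is increasing and submultiplicative on $[1,\infty)$ by Lemma~\ref{subsub}, Lemma~\ref{lll} converts $\alpha_{\Lambda^p_u(w)}<1$ into $\overline{W}_u(t)\lesssim t^q$ for some $q<p$ and all $t>1$; by Theorem~\ref{strongmaximal} together with Remark~\ref{rem}(i), this already delivers that $M$ is bounded on $\Lambda^p_u(w)$. The analogous submultiplicativity argument at $t\to 0^+$, applied to $\underline{W_u}^{1/p}$ on $(0,1]$, turns $\beta_{\Lambda^p_u(w)}>0$ into a power bound $\underline{W_u}(t)\lesssim t^\gamma$ for some $\gamma>0$ and all $0<t<1$.

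Next, I would extract the two weight conditions $u\in A_\infty$ and $w\in B^*_\infty$ from $\underline{W_u}(t)\lesssim t^\gamma$. Testing $\underline{W_u}$ against a single interval $I$ and a measurable subset $E\subset I$ with $|E|/|I|=t$ yields $W(u(E))/W(u(I))\lesssim t^\gamma$. The $\Delta_2$ hypothesis on $w$ provides a matching polynomial lower bound $W(a)/W(b)\gtrsim (a/b)^\tau$ for $0<a\leq b$, with $\tau$ the doubling exponent of $W$; combining the two estimates gives $u(E)/u(I)\lesssim (|E|/|I|)^{\gamma/\tau}$, which is the reverse-H\"older form of $A_\infty$. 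With $u\in A_\infty$ now in hand, Lemma~\ref{mayor1} yields $\overline{W}(t)\leq \underline{W_u}(C_u t^{\alpha_0})\lesssim t^{\alpha_0\gamma}$ as $t\to 0^+$ for some $\alpha_0>0$, whence Proposition~\ref{beta2} delivers $w\in B^*_\infty$.

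With $M$ bounded on $\Lambda^p_u(w)$, $u\in A_\infty$, and $w\in B^*_\infty$ all established, the conclusion follows from the sufficiency half of the characterization of $H:\Lambda^p_u(w)\to\Lambda^p_u(w)$ proved in \cite{acs2:acs2} (the necessity direction of which is precisely what is quoted in the proof of Theorem~\ref{boyd1}). The main obstacle I anticipate is the quantitative passage from $W(u(E))/W(u(I))\lesssim (|E|/|I|)^\gamma$ to a power-law bound on $u(E)/u(I)$: without exploiting the polynomial lower bound afforded by $w\in\Delta_2$, one would only obtain a logarithmic estimate of the type in Lemma~\ref{clickBstar}(ii), which falls short of what is required to place $u$ in the $A_\infty$ class.
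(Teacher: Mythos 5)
Your argument is correct and arrives at the same three intermediate facts as the paper's proof ($M$ bounded on $\Lambda^p_u(w)$, $u\in A_\infty$, $w\in B^*_\infty$), but by a genuinely different route. The paper turns $\beta_{\Lambda^p_u(w)}>0$ into the qualitative statement $\underline{W_u}(0+)=0$, reads this off as the ``for every $\varepsilon>0$ there is $\delta>0$'' condition on $W(u(S))/W(u(I))$, and then cites \cite{acs2:acs2} for the equivalence of that condition with $u\in A_\infty$ together with $w\in B^*_\infty$. You instead upgrade $\beta>0$ to a power law $\underline{W_u}(t)\lesssim t^\gamma$ via submultiplicativity (the $t\to0^+$ analogue of Lemma~\ref{lll}, which the paper essentially uses inside the proof of Proposition~\ref{beta2}), and then manufacture the two weight conditions from the paper's own tools: $u\in A_\infty$ comes from playing the power-law decay of $W(u(E))/W(u(I))$ against the lower polynomial bound on $W$ afforded by $\Delta_2$, and $w\in B^*_\infty$ then follows from Lemma~\ref{mayor1} together with Proposition~\ref{beta2}. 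This makes the passage to the weight conditions self-contained at the cost of some extra length, and it nicely supplies a sort of converse to Lemma~\ref{mayor1}. The final step coincides in substance: both you and the paper invoke, via \cite{acs2:acs2}, the Bagby--Kurtz good-$\lambda$ rearrangement inequality $(H^*f)^*_u(t)\lesssim Q(Mf)^*_u(t/4)$ for $u\in A_\infty$, the boundedness of $Q$ on decreasing functions in $L^p(w)$ under $w\in B^*_\infty$, and the boundedness of $M$ from $\alpha_{\Lambda^p_u(w)}<1$. One small caveat about phrasing: \cite{acs2:acs2} characterizes the \emph{weak}-type boundedness of $H$, so there is no ``sufficiency half of a characterization of $H:\Lambda^p_u(w)\to\Lambda^p_u(w)$'' there in the literal sense; what you are actually using (and what the paper also refers to \cite{acs2:acs2} for) is the chain good-$\lambda$ plus $Q$ plus $M$, which gives strong boundedness of $H^*$.
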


\begin{proof} If $ \beta_{\Lambda^p_u(w)}>0$, we have that necessarily $\underline{{W}_{u}}(0+)=0$ and hence, for every $\varepsilon>0$ there exists $\delta>0$ such that $\underline{{W}_{u}}(t)<\varepsilon$, for every $t<\delta$. Consequently, 
for every $\varepsilon>0$ there exists $\delta>0$ such that $W\left(u(S)\right)\leq \varepsilon W(u(I))$, provided $S \subseteq I $ and $|S |\leq \delta |I |$. But this condition was proved in \cite{acs2:acs2} to be equivalent to  $u\in A_\infty$ and $w\in B^*_\infty$. Now,  if $u\in A_\infty$ \cite {bk:bk}, 
$$
(H^*f)^*_u(t)\lesssim \big(Q\,(Mf)^*_u\big) (t/4),
$$
whenever the right hand side is finite, $H^*$ is the Hilbert maximal operator
$$
H^*f(x)= \sup_{\varepsilon>0} \bigg| \int_{|x-y| > \varepsilon} \frac{f(y)}{x-y}\,dy\bigg|, 
$$
and
$$
Qf(t)=\int_t^\infty f(s) \frac{ds}s
$$
is the conjugate Hardy operator. 

Then   (see   \cite{acs2:acs2} for the details), using the facts that, under the condition $w\in B^*_\infty$ we have that $Q$ is bounded on the cone of decreasing functions on $L^{p}(w)$,  and   $M$  is bounded on $\Lambda^p_u(w)$ since $\alpha_{\Lambda^{p}_{u}(w)}<1$, we obtain that $H^*$ is bounded on  $\Lambda^p_u(w)$. Hence, standard techniques show that, for every $f\in\Lambda^p_u(w)$, there exists $Hf(x)$ at almost every $x\in\mathbb R$ and, by Fatou's lemma, we obtain the result. 
\end{proof}

\medskip

Let us now see that, if $p>1$, then we have the converse of the previous result, and so, the Boyd theorem in the context of weighted Lorentz spaces.

\begin{theorem}\label{Boyd en Lorentz}  If  $p> 1$,  then
$$
H:\Lambda^{p}_u(w)\longrightarrow \Lambda^{p}_u(w)
$$
is bounded if and only if 
$$
\alpha_{\Lambda^{p}_{u}(w)}<1\qquad\mbox{and }\qquad  \beta_{\Lambda^p_u(w)}>0. 
$$
\end{theorem}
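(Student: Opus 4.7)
The $\Leftarrow$ direction is exactly Theorem~\ref{boyd2}, valid for every $p>0$, so only the $\Rightarrow$ direction requires new work. Theorem~\ref{boyd1} already gives $\beta_{\Lambda^p_u(w)}>0$ (and, through the results of~\cite{acs2:acs2}, both $u\in A_\infty$ and $w\in B^*_\infty$), all of which holds with no restriction on $p$. The remaining task is to show $\alpha_{\Lambda^p_u(w)}<1$, which by Theorem~\ref{popo}, Theorem~\ref{strongmaximal}, and Lemma~\ref{lll} is equivalent to the existence of some $q<p$ with $\overline W_u(t)\lesssim t^{q}$ for every $t>1$.

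My plan is a test-function argument driven by the definition of $\overline W_u$. Given disjoint intervals $(I_j)$ and subsets $S_j\subseteq I_j$, each a finite union of intervals, with $|I_j|=t|S_j|$ (permitted by Remark~\ref{igual}), I attach to each $I_j$ an adjacent ``shadow'' interval $\widetilde I_j$ of the same length, placing inside it a translated copy $\widetilde S_j$ of $S_j$. The elementary pointwise lower bound $|H\chi_{E}(x)|\gtrsim |E|/|I_j|$ for $E\subseteq \widetilde I_j$ and $x\in I_j$ implies that $f:=\chi_{\bigcup_j \widetilde S_j}$ satisfies $|Hf(x)|\gtrsim 1/t$ on $\bigcup_j I_j$. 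Applying $\|Hf\|_{\Lambda^p_u(w)}\lesssim\|f\|_{\Lambda^p_u(w)}$, using $u\in A_\infty$ to compare $u(\widetilde S_j)$ with $u(S_j)$ and $w\in \Delta_2$ to pass from $u$-measures to $W$-values, produces the weak-type bound
\begin{equation*}
\overline W_u(t)\lesssim t^{p},\qquad t>1.
\end{equation*}

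The central difficulty, and the place where $p>1$ enters decisively, is to strengthen this exponent $p$ to some strictly smaller $q<p$. In the unweighted case $u=1$ of Section~2, this is exactly the Ari\~no--Muckenhoupt self-improvement of a $B_p$ weight under the extra hypothesis $w\in B^*_\infty$, and I would try to run the analogous Gehring-type self-improvement directly on the submultiplicative function $\overline W_u$ (Lemma~\ref{subsub}), feeding in $\beta_{\Lambda^p_u(w)}>0$ as the replacement for $B^*_\infty$. The subtlety is that translations do not preserve the $\Lambda^p_u(w)$-quasi-norm, so keeping the constants uniform in the shadow-interval construction, and then squeezing a strictly smaller exponent out of the resulting weak-type inequality, is where I expect the real work to lie; the failure of the corresponding converse for $p\leq 1$ pointed out in Remark~\ref{rem}(ii) is a clear indication that the hypothesis $p>1$ must be used essentially at this step.
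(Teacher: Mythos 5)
Your reduction of the problem to proving $\alpha_{\Lambda^p_u(w)}<1$, i.e.\ to producing some $q<p$ with $\overline W_u(t)\lesssim t^q$, is correct and matches the paper, as does the use of Theorems~\ref{boyd1} and~\ref{boyd2} for the other direction and the $\beta>0$ part. But the argument you sketch does not close the gap you yourself identify. Testing $H$ against an indicator $\chi_{\bigcup_j \widetilde S_j}$ cannot give more than $\overline W_u(t)\lesssim t^p$, and the proposed Gehring-type self-improvement using $\beta_{\Lambda^p_u(w)}>0$ is not carried out; indeed it is unclear that it can be, since $\beta_{\Lambda^p_u(w)}>0$ controls $\underline{W_u}$ near $0$, a different object from $\overline W_u$ near $\infty$, and there is no general relation between the two that would force the exponent-$p$ estimate to self-improve. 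The example sensitivity pointed to in Remark~\ref{rem}(ii) (for $p\le 1$, $\overline W_u(t)\lesssim t^p$ characterizes weak-type boundedness and can hold without any $q<p$) shows the missing step is substantive, not a formality.

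The paper's mechanism is entirely different and never passes through a preliminary exponent-$p$ bound that is later improved. It reduces to the maximal operator via $H$ strong $\Rightarrow H$ weak $\Rightarrow M$ weak (the last implication from \cite{acs2:acs2}), so that the whole burden rests on Theorem~\ref{BpiBp}: if $p>1$ and $M:\Lambda^p_u(w)\to\Lambda^{p,\infty}_u(w)$ is bounded, then $\overline W_u(t)\lesssim t^q$ for some $q<p$. Its proof replaces the indicator of $S$ by the pyramid function $f_{S,I}$ constructed in Lemma~\ref{mod}: the average of $f_{S,I}$ over $I$ is $(1+\log s)/s$ with $s=|I|/|S|$ (Lemma~\ref{mean}), while the superlevel-set structure guaranteed by Lemma~\ref{mod}(iii) keeps $\|f_{S,I}\|_{\Lambda^p_u(w)}^p\lesssim (1+\log s)\,W(u(S))$. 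Plugging these into the weak-type inequality gives directly $\overline W_u(s)\lesssim (1+\log s)^{1-p}s^p$, and since $1-p<0$ (this is exactly where $p>1$ enters), Lemma~\ref{lll} yields the required $q<p$. That logarithmic gain, built into the test function rather than extracted afterwards by self-improvement, is the idea your proposal is missing; the construction of $f_{S,I}$ is the technical heart of the argument, and no translation, shadow-interval, or $A_\infty$-comparison device appears.
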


\medskip

Note that, it only remains to prove that $\alpha_{\Lambda^{p}_{u}(w)}<1$. By Theorem \ref{popo}, it is equivalent to prove
$$
H:\Lambda^{p}_u(w)\longrightarrow \Lambda^{p}_u(w)\implies M:\Lambda^{p}_u(w)\longrightarrow \Lambda^{p}_u(w). 
$$
In \cite{acs2:acs2}, it was proved that 
$$
H:\Lambda^{p}_u(w)\longrightarrow \Lambda^{p, \infty}_u(w)\implies M:\Lambda^{p}_u(w)\longrightarrow \Lambda^{p, \infty}_u(w). 
$$
Since the strong boundedness implies the weak boundedness, Theorem \ref{Boyd en Lorentz} will be proved if, for $p>1$, 
$$
M:\Lambda^{p}_u(w)\longrightarrow \Lambda^{p, \infty}_u(w)\implies M:\Lambda^{p}_u(w)\longrightarrow \Lambda^{p}_u(w). 
$$
This  is a problem of independent interest and it was left open in \cite{crs:crs}. We dedicate the next section to the proof of this result, which will conclude the proof of Theorem \ref{Boyd en Lorentz}.

\section{Weak and strong boundedness of Hardy-Littlewood maximal operator in weighted Lorentz spaces}

The main result of this section is the following theorem:

\begin{theorem}\label{BpiBp}
 If    $p>1$,  then
$$
M:\Lambda^{p}_u(w)\longrightarrow \Lambda^{p, \infty}_u(w)
$$
is bounded if and only if, for some $q<p$,  $\overline W_u (t)\lesssim t^q$  for every $t>1$. 
Consequently, 
$$
M:\Lambda^{p}_u(w)\rightarrow \Lambda^{p, \infty}_u(w)\, \mbox { is bounded}\quad\iff M:\Lambda^{p}_u(w)\rightarrow \Lambda^{p}_u(w)\, \mbox { is bounded}. 
$$
\end{theorem}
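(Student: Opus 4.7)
The reverse implication is immediate from Theorem~\ref{strongmaximal}: if $\overline{W}_u(t)\lesssim t^q$ for some $q<p$, then $M$ is strongly bounded on $\Lambda^p_u(w)$, hence \emph{a fortiori} into the weak space $\Lambda^{p,\infty}_u(w)$. The same observation immediately delivers the ``consequently'' clause, since weak and strong boundedness of $M$ are then both equivalent to the single geometric condition $\overline{W}_u(t)\lesssim t^q$ for some $q<p$.

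For the forward implication, the plan is to start from the bound $\overline{W}_u(t)\lesssim t^p$ supplied by Remark~\ref{rem}(ii) and upgrade it to $\overline{W}_u(t)\lesssim t^q$ with $q<p$. By the submultiplicativity of $\overline{W}_u$ (Lemma~\ref{subsub}) and the argument appearing in the proof of Lemma~\ref{lll}, the whole task reduces to exhibiting a single value $t_0>1$ together with a constant $c<1$ for which $\overline{W}_u(t_0)\leq c\,t_0^p$. Indeed, submultiplicativity then gives $\overline{W}_u(t_0^k)\leq c^k\,t_0^{kp}$, and for any $x\in[t_0^k,t_0^{k+1})$ one deduces $\overline{W}_u(x)\lesssim x^q$ with the explicit exponent $q=p+\frac{\log c}{\log t_0}<p$.

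Producing such a pair $(t_0,c)$ is the main obstacle: testing the weak-type inequality on a single indicator $f=\chi_{\bigcup S_j}$ only reproduces $\overline{W}_u(t_0)\leq C^p\,t_0^p$ with $C$ the weak-type constant, and since typically $C\geq 1$ this bound cannot by itself fall below $t_0^p$. To gain a strict improvement I would exploit the hypothesis $p>1$ through a multi-scale test function. Given a configuration $\{I_j,S_j\}$ of ratio $t_0$ nearly saturating $\overline{W}_u(t_0)$, I would apply Lemma~\ref{covering} iteratively inside each $S_j$ to build $N$ nested generations of intervals-and-subsets, all of ratio $t_0$, producing finite unions $T_0\supset T_1\supset\cdots\supset T_N$ whose successive $W\circ u$-ratios approximately equal $\overline{W}_u(t_0)$. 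Testing the weak-type inequality on the layer-function $f=\sum_{k=0}^{N}\alpha_k\chi_{T_k}$, with coefficients $\alpha_k$ tuned to the geometry, converts the weak bound into an inequality between two expressions that behave like geometric sums in the quantities $\overline{W}_u(t_0)$ and $\alpha_k^p$. Crucially, because $p>1$, the sum controlling the $\Lambda^p_u(w)$-side is bounded independently of $N$ whenever $\overline{W}_u(t_0)/t_0^p$ is of order one, while the $\Lambda^{p,\infty}_u(w)$-side continues to feel all $N$ generations through the defining supremum. Letting $N$ grow therefore forces $\overline{W}_u(t_0)/t_0^p$ to drop strictly below one, for $t_0$ chosen sufficiently large, which is exactly the strict gap required. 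Once $\overline{W}_u(t)\lesssim t^q$ with $q<p$ is in hand, Theorem~\ref{strongmaximal} yields the strong boundedness of $M$ on $\Lambda^p_u(w)$ and closes the circle, thereby also completing the proof of Theorem~\ref{Boyd en Lorentz}.
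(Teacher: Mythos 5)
Your overall plan is in the right spirit and in fact quite close to what the paper actually does, but as written it is a sketch of a proof rather than a proof, and the one concrete structural claim you rely on is not justified. Let me be specific. You reduce the problem, correctly, to finding $t_0>1$ with $\overline W_u(t_0)<t_0^p$, and you propose to get this by testing the weak-type inequality on a multi-scale layer function built from nested generations $T_0\supset T_1\supset\cdots\supset T_N$, each of geometric ratio $t_0$. The paper does essentially the same thing, except that its test function $f_{S,I}$ (Lemma~\ref{mod}) is the ``continuous'' version of your layered step function: its level sets $\{f_{S,I}\ge\lambda\}$ for $\lambda\in(|S|/|I|,1]$ form a continuum of nested families of intervals, each with $|S\cap J_{k,\lambda}|=\lambda|J_{k,\lambda}|$, which is exactly the structure your $T_k$ would need. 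The payoff is that the $\Lambda^p$-norm integral $\int_{1/s}^1 p\lambda^{p-1}\,W(u(\{f>\lambda\}))\,d\lambda$ can be bounded directly using the already-available estimate $\overline W_u(1/\lambda)\lesssim\lambda^{-p}$, yielding $\|f\|_{\Lambda^p_u(w)}^p\lesssim(1+\log s)\,W(u(\cup S_j))$, while Lemma~\ref{mean} gives the exact average $\frac{1}{|I_j|}\int_{I_j}f=\frac{1+\log s}{s}$ needed for the lower bound on $Mf$ over $I_j$. Combining, one gets $\overline W_u(s)\lesssim s^p(1+\log s)^{1-p}$, and since $p>1$ this is $o(s^p)$ as $s\to\infty$.

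The gaps in your version are the following. First, you claim the iterated application of Lemma~\ref{covering} produces generations ``whose successive $W\circ u$-ratios approximately equal $\overline{W}_u(t_0)$.'' Lemma~\ref{covering} only controls the \emph{Lebesgue} ratios $|T_k|/|T_{k+1}|$; it gives you no control at all over $W(u(T_k))/W(u(T_{k+1}))$, which is determined by $u$ and $w$ and cannot be prescribed. In fact the argument does not need this claim --- what you need is the one-sided bound $W(u(T_k))\lesssim t_0^{(N-k)p}W(u(T_N))$, which follows from $\overline W_u(t)\lesssim t^p$ and the interval/subset structure of the intermediate layers, and you would have to actually verify that iterating Lemma~\ref{covering} delivers that structure (each component of $T_k$ contains a ratio-$t_0^{N-k}$ subset from $T_N$). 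Second, the mechanism you invoke is ``geometric sums,'' but the gain is actually logarithmic: with $N\approx\log s$ generations one obtains $\overline W_u(s)\lesssim s^p N^{1-p}$, i.e.\ a factor $(\log s)^{1-p}$, not a geometric factor. Third, and most importantly, the paragraph beginning ``Producing such a pair $(t_0,c)$ is the main obstacle'' is, as you acknowledge, a description of what you \emph{would do}; the concrete construction of the layers, the choice of coefficients $\alpha_k$, and the norm computations are not carried out. The paper supplies exactly these ingredients via Lemmas~\ref{mod} and~\ref{mean}, so your proposal, once completed along the lines above, would recover essentially the same proof, with the discrete layers replaced by the continuous level sets of $f_{S,I}$.
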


\medskip

The idea of the proof is inspired by the article of Neugebauer \cite{n:n}.
Firstly, we need the following lemma: 

\begin{lemma} \label{mod} Given an interval $I$ and a set $S=\cup_{j=1}^m {S_j}$, with $S_j$ pairwise disjoint intervals, there exists a positive function $f_{S,I}$ supported in $I$ satisfying the following conditions: 
\medskip

\noindent
(i) $f_{S,I}(x)=1$, for every $x\in S$. 

\noindent
(ii) $f_{S,I}(x)\ge\frac{|S|}{|I|}$, for every $x\in I$. 

\noindent
(iii) For every $\frac{|S|}{|I|}<\lambda\le 1$, the level set
$$
\{x: f_{S,I}(x)\ge\lambda\}=\cup_k J_{k, \lambda}, 
$$
where  $\{J_{k, \lambda}\}_k$ are pairwise disjoint intervals satisfying 
$$
|S\cap J_{k, \lambda}|=\lambda |J_{k, \lambda}|, 
$$
and there exists $ L_{k,  \lambda}\subset\{1, \cdots, m\}$ such that 
$$
J_{k, \lambda}\cap S= \cup_{l\in L_{k, \lambda} }{S_{j_l}}.
$$
\end{lemma}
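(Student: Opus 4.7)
The plan is to construct $f_{S,I}$ by prescribing its super-level sets directly. Order the disjoint intervals so that $S_1<S_2<\dots<S_m$ from left to right, and write $I=[A,B]$. I will define a family $\{E_\la\}_{\la\in[|S|/|I|,1]}$ of closed subsets of $I$, monotone non-increasing in $\la$, with $E_1=S$ and $E_{|S|/|I|}=I$, whose connected components are closed intervals $J$ satisfying
\[
J\cap S=\bigcup_{l\in L_J}S_{j_l}\qquad\text{and}\qquad |S\cap J|=\la|J|.
\]
Setting $f_{S,I}(x):=\sup\{\la: x\in E_\la\}$ for $x\in I$ and $0$ outside, and using that endpoints of components move continuously outward as $\la$ decreases, gives $\{f_{S,I}\ge \la\}=E_\la$ for every $\la\in(|S|/|I|,1]$ (property (iii)); property (i) is immediate since $S\subseteq E_\la$ for every admissible $\la$, while (ii) follows from $I=E_{|S|/|I|}$.

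The family $\{E_\la\}$ is built by a `grow-and-merge' dynamics as $\la$ decreases from $1$. Starting from $E_1=S$, each component $J_{k,\la}$ contains a consecutive block of the $S_j$'s of total $S$-measure $m_k$, and is required to have length $m_k/\la$. Between events, free endpoints move outward, symmetrically when both are free: if $[c_k,d_k]=J_{k,\la^*}$ is the configuration at the last event $\la^*$, then
\[
J_{k,\la}=\Bigl[c_k-\tfrac{m_k}{2}\bigl(\tfrac{1}{\la}-\tfrac{1}{\la^*}\bigr),\,d_k+\tfrac{m_k}{2}\bigl(\tfrac{1}{\la}-\tfrac{1}{\la^*}\bigr)\Bigr],
\]
which gives $|J_{k,\la}|=m_k/\la$ by a direct length check; an endpoint that has reached $A$ or $B$ is pinned there, and the free endpoint is driven so that the same identity $|J_{k,\la}|=m_k/\la$ persists. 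Two kinds of event occur: a \emph{pinning}, when an endpoint first meets $A$ or $B$, and a \emph{merging}, when two adjacent components touch. Since each merging reduces the component count by one and pinnings occur at most twice, the whole evolution has at most $m+1$ events and terminates at $\la=|S|/|I|$ with $E_\la=I$.

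The main check, and the only genuine obstacle, is that the two structural identities persist through every event. Between events the block $L_{J_{k,\la}}$ is locally constant, because each endpoint moves into a gap between the $S_j$'s or into the outer region and never enters an $S_j$ only partially before reaching its other end, and the length identity is built into the evolution rule. At a merging at level $\la^*$ the two lengths $m_k/\la^*$ and $m_{k+1}/\la^*$ add to $(m_k+m_{k+1})/\la^*$ and the two index sets unite, so both identities pass to the merged component, which then continues under the same rule with new mass $m_k+m_{k+1}$. A pinning alters only the prescription on the opposite, free endpoint and leaves both $L_J$ and the length identity untouched. These elementary verifications across the finitely many events, together with the definition of $f_{S,I}$ via the sup above, complete the construction and establish (i)--(iii).
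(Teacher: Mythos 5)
Your construction is correct, and it arrives at essentially the same function as the paper's, but it is organized quite differently: the paper proceeds by induction on $m$, whereas you build the full family of super-level sets $\{E_\lambda\}$ directly via an explicit ``grow-and-merge'' evolution in $\lambda$. The paper's inductive step does one merge at a time: it sets $f_0=\max(\sup_k f_{S_k,I},|S|/|I|)$, identifies the first merging threshold $\lambda_0$ (the largest $r$ below which two of the single-interval level sets overlap), observes that $E_{\lambda_0}$ has fewer than $m$ components, applies the inductive hypothesis to $E_{\lambda_0}$ to get $g$, and glues $f_0$ on $\{f_0\ge\lambda_0\}$ with $\lambda_0 g$ elsewhere. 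Your proof unrolls this same dynamics all at once, treating merges and boundary-pinnings as a finite sequence of at most $m+1$ events and verifying the two invariants, $|S\cap J_{k,\lambda}|=\lambda|J_{k,\lambda}|$ and $J_{k,\lambda}\cap S=\cup_l S_{j_l}$, across each event. What your approach buys is transparency: it makes the ``level sets are the primary object'' idea explicit, and the termination argument (total component length $\sum m_k/\lambda$ reaching $|I|$ at $\lambda=|S|/|I|$ forces a single component equal to $I$) is clean. What the paper's induction buys is a shorter formal verification, since all the delicate checking is pushed into the trivial $m=1$ base case and a single gluing step. Two minor points you might tighten: (a) the claim that a free endpoint never partially enters an $S_j$ deserves one more sentence, namely that each $S_j$ lies inside some component at every $\lambda\le 1$ and adjacent components approach one another, so a merge must fire before any endpoint could reach a covered $S_j$; and (b) simultaneous events (a pinning coinciding with a merge, or two merges at once) need to be explicitly allowed, though they cause no real difficulty.
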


\begin{proof}  For simplicity we shall use the following notation: if we have a collection of sets $\{F_j\}_{j=1}^N$, we write $\cup^* F_j$ to indicate the union of a subcollection, whenever  it is not important which subcollection is. Similarly, we write $\sum^* |F_j|$ to indicate that we are summing the measures of the sets of a certain subcollection. We emphasize that the symbols $\cup^*$  or $\sum^*$ in two different places may refer to two different subcollections. 

The proof is done by induction in $m$.  The case $m=1$ is easy since, in this case, if $I=(a, d)$ and $S=(b, c)$
with $a< b<c < d$,  we take, for every $\frac{|S|}{|I|}<\lambda<1$,  $x_\lambda\in (a, b)$ and $y_\lambda\in (c, d)$ such that
$$
\frac{b-x_\lambda}{b-a}= \frac{y_\lambda-c}{d-c}\qquad\mbox{and}\qquad y_\lambda-x_\lambda = \frac 1\lambda (c-b). 
$$
Then, if we define  
$$
J_{1, \lambda}=[x_\lambda, y_\lambda], \qquad\text{if }\, \frac{|S|}{|I|}<\lambda\le1,  
$$
$$
J_{1, \lambda}=\overline I, \qquad \text{if }\, \lambda\le \frac{|S|}{|I|},  
$$
and $J_{1, \lambda}=\emptyset$, if $\lambda>1$,  one can immediately see that if $\lambda_1\le \lambda_2$,  $J_{1, \lambda_2}\subset J_{1, \lambda_1}$, and 
 $$
 J_{1, \lambda}=\bigcap_{\mu<\lambda }J_{1, \mu}.
 $$

Hence, if we define
$$
f_{S,I}(x)=\sup\{\lambda>0: x\in J_{1, \lambda}\},
$$
we obtain that   $\{x: f_{S,I}(x)\ge \lambda\}=J_{1, \lambda}$ and the rest of the properties are easy to verify. The cases where $a=b$ or $c=d$ are done similarly (see Figure~\ref{fig1}).

\begin{figure}[b] 
   \centering
   \includegraphics[width=.75\textwidth]{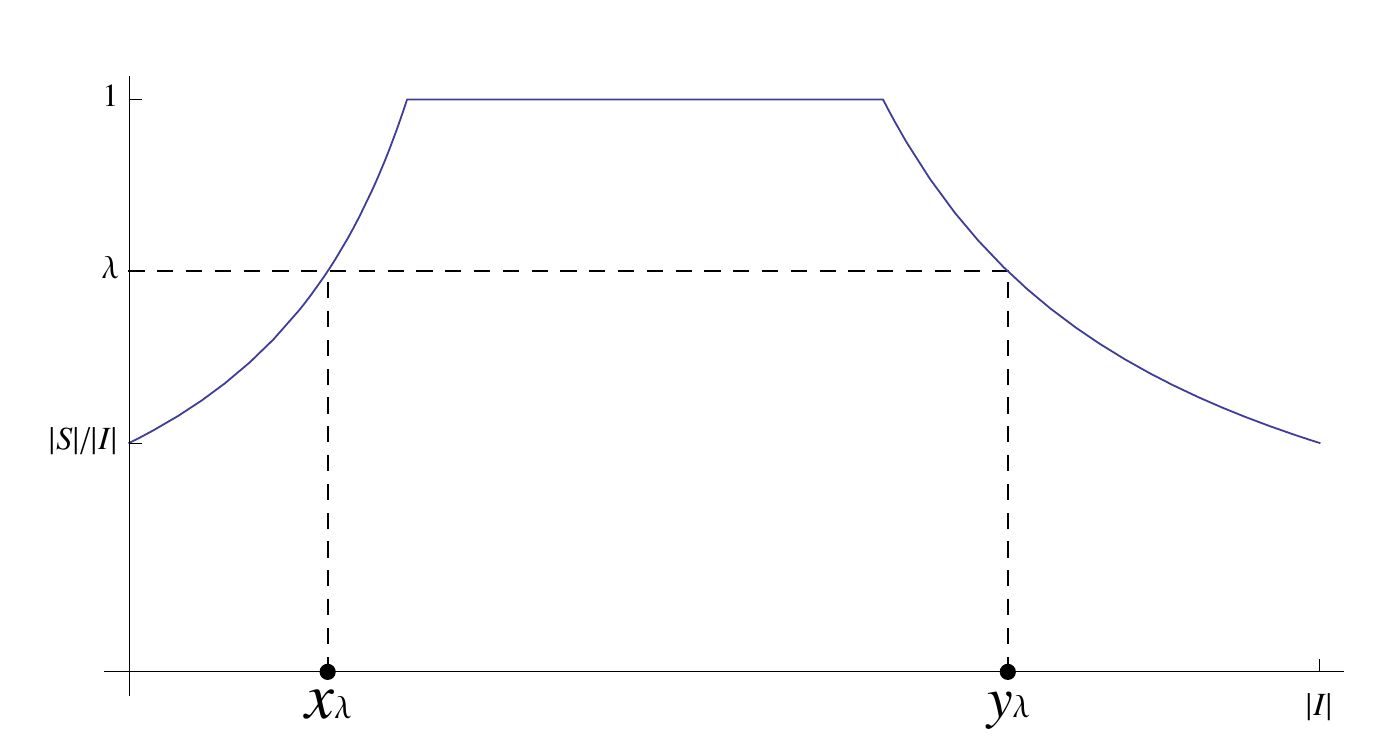} 
   \caption{$f_{S,I}$ when $m=1$.}
   \label{fig1}
\end{figure}

 Let us now assume that the result is true for $m=n$ and let us prove it for $m=n+1$. Let then $S= \cup_{j=1}^{n+1} {S_j}$, with $S_j$ pairwise disjoint intervals.  Let us define $f_k=  f_{S_k, I}$ and 
 $ 
 f_0(x)=\max \Big( \sup_k f_{k}(x), \frac{|S|}{|I|}\Big), 
 $ 
 and let
 $$
 \lambda_0=\inf\bigg\{\frac{|S|}{|I|}\le r<1: \{ f_j(x)\ge r\} \cap \{ f_k(x)\ge r\}=\emptyset, \forall j\neq k\bigg\}. 
 $$
 Let $E_{\lambda_0}=\{ x\in I: f_0(x)\ge\lambda_0\}$ and observe that $E_{\lambda_0}=\cup_{j\in J} E_{\lambda_0, j},$ where 
 $\text{card}\, J<n+1$, $E_{\lambda_0, j}$ are pairwise disjoint intervals such that    
 \begin{equation}\label{impor}
 E_{\lambda_0, j}\cap S= \cup^*{S_{j}}, 
 \end{equation}
  $\lambda_0|E_{\lambda_0}|=|S|$ and, in fact, for every $j$, 
 \begin{equation}\label{egs}
  \lambda_0|E_{\lambda_0, j}|=|E_{\lambda_0, j}\cap S|.
  \end{equation}
 
 Now,  by induction hypothesis, there exists a positive function $g$ supported in $I$  such that:
 
\noindent
(i') $g(x)=1$, for every $x\in E_{\lambda_0}$. 

\noindent
(ii') $g(x)\ge\frac{|E_{\lambda_0}|}{|I|}$, for every $x\in I$.

\noindent
(iii') For every $\frac{|E_{\lambda_0}|}{|I|}<\lambda\le 1$, the level set
$$
\{x: g(x)\ge \lambda\}=\cup_k J_{k, \lambda}', 
$$
satisfying  that $\{J_{k, \lambda}'\}_k$ are pairwise disjoint intervals, 
\begin{equation}\label{nuevo}
|E_{\lambda_0}\cap J_{k, \lambda}'|=\lambda |J_{k, \lambda}'|, \qquad E_{\lambda_0}\cap J_{k, \lambda}'=\cup^* E_{\lambda_0, j}, 
\end{equation}
and 
\begin{equation}\label{hh}
\{x: g(x)\ge \lambda\}\cap E_{\lambda_0}= \cup^* {E_{\lambda_0, j}}. 
\end{equation}

Then,  we claim that the function $f_{S,I}$ defined by
$$
f_{S,I}(x)= f_0(x), \quad \text{if }x\in E_{\lambda_0},  \qquad f_{S,I}(x)= \lambda_0 g(x), \quad  \text{if }x\in I\setminus E_{\lambda_0},
$$
satisfies all the required conditions (see Figure~\ref{fig2}). Clearly (i) and (ii) hold true. To see (iii) we divide it in two cases:

\noindent
Case 1.- If $\lambda_0\le\lambda \le 1$, 
$$
\{x: f_{S,I}(x)\ge \lambda\}=    \{x\in E_{\lambda_0}: f_0(x)\ge\lambda\}=   \bigcup_{k=1}^{n+1} \{x\in I: f_k(x)\ge\lambda\} = \bigcup_k J_{k, \lambda}, 
$$
and the result follows easily. 

\noindent
Case 2.- $\frac{|S|}{|I|}<\lambda<\lambda_0$. In this case
$$
\{x: f_{S,I}(x)\ge\lambda\}= \{x: g(x)\ge\lambda/\lambda_0\}, 
$$
and since $\frac{|E_{\lambda_0}|}{|I|}= \frac{|S|}{\lambda_0|I|}<\frac \lambda{\lambda_0}<1$, we can apply (iii') and the properties of $E_{\lambda_0}$ to conclude that 
$$
\{x: f_{S,I}(x)\ge\lambda\}=\bigcup_k J_{k, \lambda/\lambda_0}', 
$$
with  $\{ J_{k, \lambda/\lambda_0}'\}_k$   pairwise disjoint intervals satisfying
$$
|E_{\lambda_0}\cap J_{k, \lambda/\lambda_0}'|=\frac \lambda{\lambda_0} |J_{k, \lambda/{\lambda_0}}'|.
$$
So we have  to prove that 
\begin{figure}[b] 
   \centering
   \includegraphics[width=\textwidth]{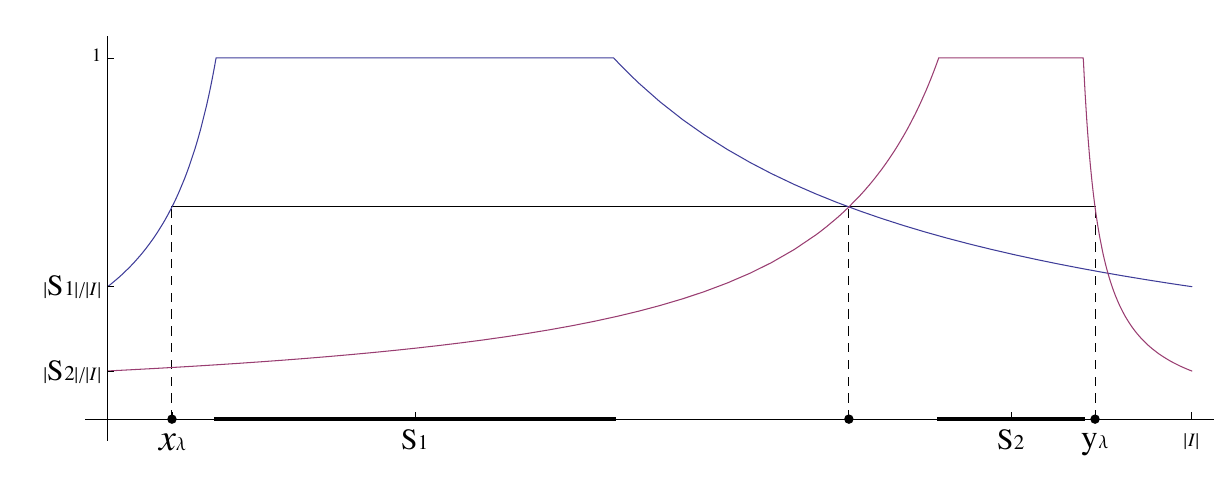} 
   \caption{$f_{S,I}$ when $m=2$.}
   \label{fig2}
\end{figure}
$$
|S\cap J_{k, \lambda/\lambda_0}'|=\lambda_0 |  E_{\lambda_0}\cap J_{k, \lambda/\lambda_0}'|.
$$

Now, from (\ref{egs}) and  (\ref{nuevo}), we obtain that
$$
\lambda_0 |  E_{\lambda_0}\cap J_{k, \lambda/\lambda_0}'|=\lambda_0 \sum^* |E_{\lambda_0, j}|=|\cup^* E_{\lambda_0, j}\cap S|=|S\cap J_{k, \lambda/\lambda_0}'|.
$$
Finally, using (\ref{impor}) and (\ref{hh}), we obtain that 
$$
\{x: g(x)>\lambda\}\cap S=  \{x: g(x)>\lambda\}\cap E_{\lambda_0}\cap S=     \cup^* {E_{\lambda_0, j}}\cap S= \cup^*
S_i,
$$
and the result follows. 
\end{proof}

\begin{lemma}\label{mean}
Let $S$ be a subset of the interval $I$  such that it is a union of  pairwise disjoint intervals: $S=\cup_{k=1}^{N}S_k$. If  $s=\frac{|I|}{|S|}$, then
$$
\frac{1}{|I|}\int_I {f_{S, I}(x)}dx=\frac{1+\log s}{s}.
$$
\end{lemma}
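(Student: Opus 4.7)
The plan is to compute the integral by the layer-cake (distribution function) formula and exploit the structural information about the super-level sets of $f_{S,I}$ proved in Lemma~\ref{mod}. Writing $s = |I|/|S|$, note that property (ii) gives $f_{S,I} \ge 1/s$ on $I$, while $f_{S,I}$ is supported on $I$ and, by the construction in Lemma~\ref{mod} (together with property (i), which pins the value $1$ on $S$), one has $f_{S,I}\le 1$ on $I$. Therefore
\begin{equation*}
\int_I f_{S,I}(x)\,dx=\int_0^1 \bigl|\{x\in I:f_{S,I}(x)\ge \lambda\}\bigr|\,d\lambda.
\end{equation*}

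Next I would split this integral into the ranges $\lambda\in(0,1/s]$ and $\lambda\in(1/s,1]$. On the first range, the super-level set is all of $I$, contributing $|I|\cdot(1/s)=|S|$. For $\lambda\in(1/s,1]=\bigl(|S|/|I|,1\bigr]$, property (iii) of Lemma~\ref{mod} writes the super-level set as a disjoint union $\bigcup_k J_{k,\lambda}$ with $|S\cap J_{k,\lambda}|=\lambda|J_{k,\lambda}|$. Since $f_{S,I}\equiv 1$ on $S$, the entire set $S$ is contained in $\{f_{S,I}\ge\lambda\}$ for every such $\lambda$, so summing over $k$ gives $\sum_k\lambda|J_{k,\lambda}|=|S|$, hence
\begin{equation*}
\bigl|\{x\in I:f_{S,I}(x)\ge \lambda\}\bigr|=\frac{|S|}{\lambda},\qquad \lambda\in(1/s,1].
\end{equation*}

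Putting the pieces together,
\begin{equation*}
\int_I f_{S,I}(x)\,dx=|S|+\int_{1/s}^{1}\frac{|S|}{\lambda}\,d\lambda=|S|\bigl(1+\log s\bigr),
\end{equation*}
and dividing by $|I|=s|S|$ yields the claimed identity $\frac{1}{|I|}\int_I f_{S,I}=\frac{1+\log s}{s}$. The proof is essentially a bookkeeping argument with no real obstacle: the only subtle point is recognizing that the equality $|S\cap J_{k,\lambda}|=\lambda|J_{k,\lambda}|$ from Lemma~\ref{mod}(iii), together with the fact that $S$ lies entirely inside each super-level set for $\lambda\le 1$, forces the distribution function on $(1/s,1]$ to equal exactly $|S|/\lambda$, producing the logarithmic term cleanly.
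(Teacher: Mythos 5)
Your proof is correct and follows essentially the same route as the paper: both compute $\int_I f_{S,I}$ via the layer-cake formula, using that the super-level set equals $I$ for $\lambda\le 1/s$, has measure $|S|/\lambda$ for $1/s<\lambda\le 1$ (a direct consequence of Lemma~\ref{mod}(iii) together with $S\subset\{f_{S,I}\ge\lambda\}$), and is empty for $\lambda>1$. Your version merely spells out the justification of the distribution function in more detail than the paper, which simply states it.
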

\begin{proof}
We observe that by construction of the function $f_{S, I}$ we have that 
$$
|\{x:f_{S, I}(x)\ge \la\}|=\begin{cases}
|I|,&\mbox{if $\la\in(0, 1/s)$}\\
|S|/\la,&\mbox{if $\la\in [1/s,1]$}\\
0,&\mbox{if $\la>1$ }.
\end{cases}
$$
Then
\begin{align*}
\frac{1}{|I|}\int_If_{S, I}(x) dx&=\frac{1}{|I|}\int_0^\infty |\{x:f_{S, I}(x)\ge \la\}|d\la=\frac{1+\log s}{s}.
\end{align*}

\end{proof}

\begin{proof}[Proof of Theorem \ref{BpiBp}]
Let $(I_j)_{j=1}^J$ be a finite family of pairwise disjoint intervals and let $(S_j)_{j=1}^{J}$ be 
such that  $S_j\subseteq I_j$, $S_j$  a finite union of pairwise disjoint intervals
with   ${|I_j|}/{|S_j|}=s$,  for every $j$.  Let \begin{equation}
f(x)=\sum_{j=1}^J f_{S_j, I_j} (x).
\end{equation}

 By the weak-type boundedness of $M$ we get, for every $t>0$, 
\begin{equation}\label{part 1}
W(u(\{x\in\R:\ Mf(x)>t\}))\lesssim \frac{1}{t^p}||f||_{\Lambda ^p_u(w)}^p.
\end{equation}
Now, 
\begin{align*}
||f||_{\Lambda ^p_u(w)}^p&=\int_0^\infty p\lambda^{p-1} W(u(\{x:\ f(x)>\lambda\}))d\lambda\\
&\leq\int_0^{1/s} p\lambda^{p-1} W(u(\{x:\ f(x)>\lambda\}))d\lambda\\
&\qquad+\int_{1/s}^1 p\lambda^{p-1} W(u(\{x:\ f(x)>\lambda\}))d\lambda
=I+II. 
\end{align*}
By Remark \ref{rem} (ii)  we have that 
$$
\frac{W\left(u\left(\bigcup_{j=1}^J I_j\right)\right)}{W\left(u\left(\bigcup_{j=1}^J S_j\right)\right)}
       \lesssim \max_{1\leq j\leq J} \left(\frac{|I_j|}{|S_j|}\right)^p\approx s^p, 
$$
and so
\begin{align*}
I\lesssim \int_0^{1/s} \la^{p-1} s^p W(u\left(\cup_{j=1}^J S_j\right))d\la\approx W(u\left(\cup_{j=1}^J S_j\right)).
\end{align*}

On the other hand, by Lemma~\ref{mod}, if $\la\in(1/s,1)$, the set $J_\la=\{x:\ f(x)>\la\}$ 
is the union of disjoint intervals $J_{\la,k}$ such that, for every $k$, 
$$
\frac{|J_{\la,k}|}{|S\cap J_{\la,k}|}=\frac{1}{\la}, $$
and $S=\cup_{j=1}^J S_j\subseteq J_\la$.
Therefore, 
$$
\frac{W\left(u\left(\bigcup_{k}J_{\la,k}\right)\right)}{W\left(u\left(S\right)\right)}
=\frac{W\left(u\left(\bigcup_{k}J_{\la,k}\right)\right)}{W\left(u\left(\bigcup_{k} S\cap J_{\la,k}\right)\right)} 
\lesssim \max_{k} \bigg(\frac{|J_{\la,k}|}{|S\cap J_{\la,k}|}\bigg)\approx \la^{-p}.
$$
Hence
\begin{align*}
II&\lesssim \int_{1/s}^1 \la^{p-1} \la^{-p} W(u\left(S\right))d\la \approx (1+\log s)W(u\left(S\right))\\
&\approx(1+\log s)W(u\left(\cup_{j=1}^J S_j\right)).
\end{align*}
So, we have that
\begin{equation}\label{part 2}
||f||_{\Lambda^p_u(w)}^p\lesssim (1+\log s)W(u\left(\cup_{j=1}^J S_j\right)).
\end{equation}

On the other hand, for every $j$, 
$$
I_j\subseteq \bigg\{x\in\R:\ Mf(x)>\frac{1}{2|I_j|}\int_{I_j} f(x)dx \bigg\},
$$
and, by Lemma~\ref{mean},  for every $j$, 
$$
\frac{1}{|I_j|}\int_{I_j} f(x)dx=\frac{1+\log s}{s}.
$$
Hence,
\begin{equation}\label{part 3}
W(u(\cup_j I_j))\leq W\left(u\left(\left\{x\in\R:\ Mf(x)>(1+\log s)/2s\right\}\right)\right).
\end{equation}
Finally,  if we fix $\displaystyle t=(1+\log s)/2s$ in \eqref{part 1}, and combine \eqref{part 2} and  \eqref{part 3} 
we obtain
$$
\frac{W(u(\cup_j I_j))}{W(u\left(\cup_j S_j\right))}\lesssim  (1+\log s)^{1-p} s^p .
$$
Then, taking supremum,  we obtain that 
$$
\overline{W}_{u}(s)\lesssim  (1+\log s)^{1-p} s^p
$$
and by Lemma~\ref{lll}, it follows that $\overline{W}_{u}(t)\lesssim t^{q}$,  for some $q<p$, as we wanted to see. 
\end{proof}

Theorem~\ref{BpiBp} not only concludes the proof of the Boyd Theorem in weighted Lorentz spaces. 
As we mentioned at the end of the Section 4, it also implies the following result:

\begin{theorem}\label{him}  Let   $p>1$ . If 
$$
H:\Lambda^{p}_u(w)\longrightarrow \Lambda^{p}_u(w)
$$
is bounded, then
$$
M:\Lambda^{p}_u(w)\longrightarrow \Lambda^{p}_u(w)
$$
is also bounded.
\end{theorem}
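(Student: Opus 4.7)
The plan is to derive the strong boundedness of $M$ from the strong boundedness of $H$ by chaining three implications, all of which are either obvious or already in hand by the time we reach this theorem.

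First I would pass from the strong-type estimate for $H$ to the weak-type estimate: clearly the boundedness $H:\Lambda^{p}_u(w)\to\Lambda^{p}_u(w)$ implies $H:\Lambda^{p}_u(w)\to\Lambda^{p,\infty}_u(w)$, since $\|f\|_{\Lambda^{p,\infty}_u(w)}\le\|f\|_{\Lambda^{p}_u(w)}$ (this is just the standard inequality $\sup_t f^*_u(t)W^{1/p}(t)\le\bigl(\int_0^\infty (f^*_u(t))^p w(t)\,dt\bigr)^{1/p}$, which follows from the monotonicity of $f^*_u$).

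Second, I would invoke the result from \cite{acs2:acs2} already cited at the end of Section~4, namely that
$$
H:\Lambda^{p}_u(w)\longrightarrow \Lambda^{p, \infty}_u(w)\implies M:\Lambda^{p}_u(w)\longrightarrow \Lambda^{p, \infty}_u(w).
$$
This is a black-box ingredient; no work is needed here beyond quoting it.

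Finally, I would apply Theorem~\ref{BpiBp}, which is the heart of the section and has just been proved: the weak-type boundedness of $M$ on $\Lambda^{p}_u(w)$ (with $p>1$) is equivalent to the strong-type boundedness, since both are characterized by $\overline{W}_u(t)\lesssim t^q$ for some $q<p$. Combining the three implications yields
$$
H:\Lambda^{p}_u(w)\to\Lambda^{p}_u(w)\ \implies\ H:\Lambda^{p}_u(w)\to\Lambda^{p,\infty}_u(w)\ \implies\ M:\Lambda^{p}_u(w)\to\Lambda^{p,\infty}_u(w)\ \implies\ M:\Lambda^{p}_u(w)\to\Lambda^{p}_u(w),
$$
which is the desired conclusion. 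Since all three implications are available, there is no real obstacle; the only substantive content—the weak-implies-strong step for $M$—has been absorbed into Theorem~\ref{BpiBp}, whose proof (using the test functions $f_{S,I}$ built in Lemma~\ref{mod} and the mean-value computation of Lemma~\ref{mean}) is the genuinely hard part and has already been carried out.
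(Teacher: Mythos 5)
Your proof is correct and follows exactly the chain of implications the paper lays out at the end of Section~4: strong $H$ implies weak $H$ (by the elementary embedding $\Lambda^p_u(w)\hookrightarrow\Lambda^{p,\infty}_u(w)$), weak $H$ implies weak $M$ (quoted from \cite{acs2:acs2}), and weak $M$ implies strong $M$ for $p>1$ (Theorem~\ref{BpiBp}). This is precisely the paper's own argument.
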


Finally, as in the proof of the  characterization of the weak-type boundedness given in \cite{acs2:acs2}, 
 and using Theorem~\ref{BpiBp}, we  can also characterize the boundedness of $H$ on $\Lambda^p_u(w)$, for $p>1$, in terms of geometric conditions on the weights $u$ and $w$ as follows:
 
\begin{theorem} If $p> 1$,  then
$$
H:\Lambda^{p}_u(w)\longrightarrow \Lambda^{p}_u(w)
$$
is bounded  if and only if the  following three conditions hold: 

\noindent
(i) $u\in A_\infty$.

\noindent
(ii) $w\in B^*_\infty$.

\noindent
(iii) Condition (\ref{raposo}) holds. 
\end{theorem}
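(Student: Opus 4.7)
The theorem is a direct corollary of the Boyd theorem for weighted Lorentz spaces (Theorem~\ref{Boyd en Lorentz}) once the two index conditions $\alpha_{\Lambda^{p}_{u}(w)}<1$ and $\beta_{\Lambda^{p}_{u}(w)}>0$ are rephrased in terms of geometric properties of the pair $(u,w)$. My plan is to verify the two implications separately by assembling the pieces already established in the paper, with the equivalences from~\cite{acs2:acs2} supplying the conversion to the geometric conditions.

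For the forward implication, assume $H:\Lambda^{p}_u(w)\to\Lambda^{p}_u(w)$ is bounded. I would invoke the result of~\cite{acs2:acs2}, exactly as in the proof of Theorem~\ref{boyd1}, to deduce (i) and (ii), i.e.\ $u\in A_\infty$ and $w\in B^{*}_\infty$. For (iii), the chain goes through the maximal operator: Theorem~\ref{him} transfers the boundedness from $H$ to $M$ on $\Lambda^{p}_u(w)$, and Theorem~\ref{strongmaximal} rephrases the strong-type boundedness of $M$ as condition~(\ref{raposo}).

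For the converse, assume (i), (ii) and (iii). Condition (iii), combined with Theorem~\ref{strongmaximal} and Remark~\ref{rem}(i), is equivalent to $\overline{W}_u(t)\lesssim t^{q}$ for some $q<p$, which by Theorem~\ref{popo} yields $\alpha_{\Lambda^{p}_{u}(w)}<1$. For the lower index, (ii) translates via Proposition~\ref{beta2} into polynomial decay of $\overline{W}^{1/p}$ at the origin, and (i) together with Lemma~\ref{mayor1} propagates this decay to $\underline{W_u}^{1/p}$, giving $\beta_{\Lambda^{p}_{u}(w)}>0$. With both Boyd indexes under control, Theorem~\ref{Boyd en Lorentz} concludes that $H$ is bounded on $\Lambda^{p}_u(w)$.

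There is no substantial new difficulty at this stage; the heavy lifting has been absorbed into the earlier results, in particular Theorem~\ref{BpiBp} (which closes the weak-to-strong gap needed for Theorem~\ref{Boyd en Lorentz}) and the \cite{acs2:acs2} characterization of the pair $A_\infty\cap B^{*}_\infty$ through the small-set condition ``$|S|\le\delta|I|\Rightarrow W(u(S))\le\varepsilon W(u(I))$''. The only point requiring care is the transfer from $\overline{W}$ to $\underline{W_u}$ in the reverse direction, which is precisely the role played by Lemma~\ref{mayor1}; beyond that, the proof is a clean assembly of Theorems~\ref{Boyd en Lorentz}, \ref{him}, \ref{popo}, \ref{strongmaximal} and Proposition~\ref{beta2}.
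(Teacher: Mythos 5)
Your overall route (reduce the geometric characterization to the two Boyd-index conditions and then invoke Theorem~\ref{Boyd en Lorentz}) is a legitimate and more explicit reorganization of the paper's one-line reference to \cite{acs2:acs2} and Theorem~\ref{BpiBp}, and the forward implication you describe --- (i), (ii) from \cite{acs2:acs2}, and (iii) via Theorem~\ref{him} composed with Theorem~\ref{strongmaximal} --- is sound. The problem is in the converse, in the step where you produce $\beta_{\Lambda^p_u(w)}>0$.

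You say ``(i) together with Lemma~\ref{mayor1} propagates this decay to $\underline{W_u}^{1/p}$''. That is not what Lemma~\ref{mayor1} provides: it states $\overline{W}(t)\le\underline{W_u}(C_u t^{\alpha})$, i.e.\ $\underline{W_u}$ dominates $\overline{W}$ from above. This is a \emph{lower} bound on $\underline{W_u}$, so it transfers decay from $\underline{W_u}$ to $\overline{W}$ --- exactly the opposite of what you need. (The same slip already appears in the paper's own cryptic justification of Theorem~\ref{boyd1}, so you have inherited it, but it is still a gap.) To obtain $\beta_{\Lambda^p_u(w)}>0$ from (i) and (ii), you should instead invoke the equivalence quoted inside the proof of Theorem~\ref{boyd2}: by \cite{acs2:acs2}, $u\in A_\infty$ and $w\in B^*_\infty$ together are equivalent to the small-set condition $W(u(S))\le\varepsilon W(u(I))$ whenever $|S|\le\delta|I|$, which is precisely $\underline{W_u}(0^+)=0$; then apply Lemma~\ref{clickBstar} to the increasing submultiplicative function $\underline{W_u}^{1/p}$ to conclude $\beta_{\Lambda^p_u(w)}>0$. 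Alternatively, the inequality you actually need, $\underline{W_u}(r)\lesssim\overline{W}(Cr^{\gamma})$, does hold for $u\in A_\infty$ via the estimate $u(E)/u(I)\lesssim(|E|/|I|)^{\gamma}$ applied term by term on the disjoint $I_j$, but this requires a separate argument and is not Lemma~\ref{mayor1}. Once $\beta_{\Lambda^p_u(w)}>0$ is correctly established, the rest of your assembly through Theorem~\ref{Boyd en Lorentz} goes through.
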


\end{document}